\newtheorem{lm}{{\bf {Lemma}}}[section]
\newtheorem{thm}{{\bf {Theorem}}}[section]
\newtheorem{remark}{{\bf {Remark}}}[section]
\begin{document}
\begin{frontmatter}

\title{Small limit cycles bifurcating in pendulum
systems under trigonometric perturbations}
\author[add1]{Yun Tian\corref{cor1}}
\ead{ytian22@shnu.edu.cn}

\author[add1]{Tingting Jing}
\author[add1]{Zhe Zhang}

\cortext[cor1]{Corresponding author.}
\address[add1]{Department of Mathematics, Shanghai Normal University, Shanghai, China}

\begin{abstract}
In this paper, we consider the bifurcation of small-amplitude limit cycles near the origin
in perturbed pendulum systems of the form
$\dot x= y$, $\dot y=-\sin(x)+\varepsilon Q(x,y)$,
where $Q(x,y)$ is a smooth or piecewise smooth polynomial in the triple $(\sin(x), \cos(x), y)$ with free coefficients.
We obtain the sharp upper bound on the number of positive zeros of its associated first order Melnikov function near $h=0$
for $Q(x,y)$ being smooth and piecewise smooth with the discontinuity at $y=0$,
respectively.
\end{abstract}

\begin{keyword}
Limit cycles; perturbed pendulum system; Melnikov function
\end{keyword}
\end{frontmatter}

\section{Introduction and main results }

Consider cylinder autonomous systems of the form
\begin{equation}\label{h1}
\dot x = H_y(x,y) + \varepsilon P(x,y),\quad
\dot y = -H_x(x,y) + \varepsilon Q(x,y),
\end{equation}
where $|\varepsilon|\ll1$, $H$, $P$ and $Q$ are analytic functions in $x$ and $y$ satisfying
\begin{equation*}
H(x+\omega,y)=H(x,y),\quad
P(x+\omega,y)=P(x,y),\quad
Q(x+\omega,y)=Q(x,y)
\end{equation*}
for a constant $\omega>0$.
System \eqref{h1} can be taken as a $\omega$-periodic perturbation of the Hamiltonian system
\begin{equation}\label{h2}
\dot x= H_y(x,y),\quad \dot y=-H_x(x,y),
\end{equation}
on the circular cylinder $[0,\omega]\times\mathbb{R}$.

As we know, system \eqref{h2} on the cylinder could have two types of periodic orbits:
oscillatory type and rotary type.
An orbit $\Gamma_h$ of system \eqref{h2} is called a periodic orbit of oscillatory type
if the solutions $(x(t),y(t))$ corresponding to $\Gamma_h$
satisfy
\[x(t+T_1)=x(t),\quad y(t+T_1)=y(t)\]
 for a positive constant $T_1$.
If the corresponding solutions $(x(t),y(t))$ of $\Gamma_h$
satisfy
\begin{equation*}
\begin{split}
&x(t+T_2)=x(t)+\omega,\quad y(t+T_2)=y(t),\\
\mbox{or}\,\, &x(t+T_2)=x(t)-\omega,\quad y(t+T_2)=y(t)
\end{split}
\end{equation*}
  for a constant $T_2>0$,
then $\Gamma_h$ is called a periodic orbit of rotary type.
Clearly,
periodic orbits of oscillatory type can be continuously deformed to a point,
while periodic orbits of rotary type surround the circular cylinder, and can not be
continuously deformed to a point.
The region continuously filled with periodic orbits of oscillatory (rotary) type
is the so-called oscillatory (rotary) region.

Under small perturbations, limit cycles can be produced in system \eqref{h1}
from a oscillatory (rotary) region $\Sigma=\bigcup_{h\in J}\Gamma_h$,
where $\Gamma_h$ represents a periodic orbit defined by $H(x,y)=h$,
and $J$ is an open interval.
We can see that the number of limit cycles bifurcating from $\Sigma$ for $\varepsilon$ sufficiently small
can be estimated by the maximum number of isolated zeros in $J$ (counting multiplicity) of
the first order Melnikov function given by
\begin{equation*}
M(h)=\oint_{\Gamma_h}Q(x,y)\mathrm{d}x-P(x,y)\mathrm{d}y, \quad h\in J.
\end{equation*}
Determining the maximum number of isolated zeros of $M(h)$ can be seen
as an extension of the weakened Hilbert's 16th problem to cylinder systems.
Lots of results have been derived for the study of limit cycles of system \eqref{h1},
for example see
\cite{AGP2007,GGM2016,GR2019,H1994,HJ1996,HW1996,SHZ2021, Y2021}.

Gasull, Geyer and Ma\~nosas \cite{GGM2016} considered the perturbed pendulum-like system
    \begin{equation}\label{h3}
     \dot x=y,\quad
       \dot y=-\sin(x)+\varepsilon \sum_{s=0}^{m}Q_{n,s}(x)y^{s},\,\,
    \end{equation}
    where the functions $Q_{n,0}(x)$, $\ldots$, $Q_{n,m}(x)$ are trigonometric polynomials of degree at most $n$.
Note that the unperturbed system of \eqref{h3} is
\begin{equation}\label{h3a}
\dot x = y,\quad \dot y=-\sin(x)
    \end{equation}
with the Hamiltonian
\[H(x,y)=\frac{1}{2}y^2+1-\cos (x)\]
defined on the cylinder $[-\pi,\pi]\times \mathbb{R}$.
    System \eqref{h3a} has an oscillatory region given by $\bigcup_{h\in(0,2)}\Gamma_h$ and
    a rotary region with two continuous families of periodic orbits $\bigcup_{h\in(2,\infty)}\Gamma_h$.
    The origin is a center surrounded by periodic orbits in the oscillatory region.
 Upper bounds were obtained in \cite{GGM2016} for the number of zeros
of the first order Melnikov function in both the oscillatory region and the rotary region of system \eqref{h3}, respectively.
    In addition, Yang \cite{Y2021} gave upper bounds of limit cycles
    for piecewise smooth perturbations of system \eqref{h3} with switching lines given by $x=0$ and $y=0$
     by studying the first order Melnikov function.
    Shi, Han and Zhang \cite{SHZ2021} derived the asymptotic expansions of the first order Melnikov function
    near a double homoclinic loop for system \eqref{h1}, and investigated the number of limit cycles near the double
    homoclinic loop $\Gamma_2$ for system \eqref{h3} as an application of their results.

More relevant results on pendulum systems can be found in \cite{BPS1977, CT2020, I1988,L1999,M2004,M1989,SC1986}.

    Inspired by \cite{GGM2016}, we consider the following cylinder near-Hamiltonian system
    \begin{equation}\label{b11}
       \dot x=y,\quad
       \dot y=-\sin(x)+\varepsilon \sum_{s=s_1}^{s_2}Q_{n,s}(x)y^{s},\,\,
    \end{equation}
    where $s_1$ and $s_2$ are two natural numbers with $s_1\le s_2$, and
    \begin{equation}\label{h4}
    Q_{n,s}(x)=\sum_{i=0}^n a_{i,s}\cos^i(x)+\sin(x)\sum_{i=0}^{n-1} \tilde a_{i,s}\cos^i(x)
    \end{equation}
    with $a_{i,s}$ and $\tilde a_{i,s}$ as free parameters.

    In this paper, for \eqref{b11} we shall study the maximum number of isolated zeros in $0<h\ll 1$ of the first order Melnikov
    function
     \begin{equation}\label{h5}
    M(h)=\oint_{\Gamma_h}\sum_{s=s_1}^{s_2}Q_{n,s}(x)y^{s}\mathrm{d}x,\quad h\in (0,2).
    \end{equation}
    We have the following theorem.

    \begin{thm}\label{thm1}
    Let \eqref{h4} and \eqref{h5} hold. Then the Melnikov function $M(h)$ in \eqref{h5} has the following form
    \begin{equation*}
    M(h)=\oint_{\Gamma_h}\sum_{k=r}^{r+m-1}Q_{n,2k+1}^{e}(x)y^{2k+1}\mathrm{d}x, \quad h\in(0,2),
    \end{equation*}
    where $r=[\frac{s_{1}}{2}]$, $m=[\frac{s_{2}-2r+1}{2}]$, $Q_{n,2k+1}^{e}(x)$ is the even part of $Q_{n,2k+1}(x)$.
    When $M(h) \not\equiv0$, there exists $0<\varepsilon_0\ll 1$ such that on the interval $h\in(0,\varepsilon_0)$
    $M(h)$ has at most $n+2m-2$ zeros (counting multiplicity) as $n>0$,
     or at most $m-1$ zeros (counting multiplicity) as $n=0$.
    This upper bound is sharp.
    \end{thm}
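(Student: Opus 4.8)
The plan is to pass from $M(h)$ to a concrete family of one-variable integrals, extract their Picard--Fuchs structure, and then read off the number of small zeros from an extended complete Chebyshev (ECT) argument. First I would establish the stated form of $M(h)$ by exploiting the two reflection symmetries of $\Gamma_h$. For $h\in(0,2)$ the orbit oscillates between the turning points $x=\pm x_h$ with $\cos x_h=1-h$, and on it $y^2=2(h-1+\cos x)$. Parametrizing by $x$ and splitting $\Gamma_h$ into its upper and lower arcs gives
\[
\oint_{\Gamma_h}f(x)y^s\,dx=\bigl[1-(-1)^s\bigr]\int_{-x_h}^{x_h}f(x)\bigl(2(h-1+\cos x)\bigr)^{s/2}\,dx ,
\]
so every even power $s$ drops out and only $s=2k+1$ survives. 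Writing $Q_{n,s}=Q_{n,s}^{e}+Q_{n,s}^{o}$ with $Q^{o}_{n,s}(x)=\sin(x)\sum \tilde a_{i,s}\cos^i(x)$ odd in $x$, the odd part integrates to zero against the even weight $(2(h-1+\cos x))^{k+1/2}$ over the symmetric interval $[-x_h,x_h]$. This yields exactly the announced reduction, with $r=[s_1/2]$ and $m=[(s_2-2r+1)/2]$ counting the admissible odd indices $2k+1\in[s_1,s_2]$.

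Next, set $A_k(h)=\int_{-x_h}^{x_h}(h-1+\cos x)^{k+1/2}\,dx$. Using $\cos^i x=\bigl((h-1+\cos x)+(1-h)\bigr)^i$, each integral $\int \cos^i x\,(h-1+\cos x)^{k+1/2}dx$ is a polynomial-in-$h$ combination of $A_k,\dots,A_{k+i}$, so $M(h)=\sum_{p=r}^{r+m+n-1}c_p(h)A_p(h)$ with $\deg_h c_p\le n$ and coefficients linear in the free parameters. Integration by parts (differentiating $\sin x\,(h-1+\cos x)^{k+1/2}$, whose boundary terms vanish) produces the three-term relation
\[
(k+\tfrac32)A_{k+1}=(2k+2)(h-1)A_k+(k+\tfrac12)h(2-h)A_{k-1},
\]
together with $A_k'(h)=(k+\tfrac12)A_{k-1}(h)$. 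The recursion lets me eliminate all $A_p$ with $p\ge r+2$ in favour of $A_r$ and $A_{r+1}$, reducing $M$ to the canonical form $M(h)=P(h)A_r(h)+R(h)A_{r+1}(h)$ with polynomials $P,R$ whose degrees I would track carefully in $n$ and $m$.

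Third, I would pin down the behaviour at $h=0$. The substitution $\sin(x/2)=\sqrt{h/2}\,\sin\phi$ turns $A_k$ into $A_k(h)=\sqrt2\,h^{k+1}G_k(h)$, where $G_k(h)=\int_{-\pi/2}^{\pi/2}\cos^{2k+2}\phi\,(1-\tfrac h2\sin^2\phi)^{-1/2}d\phi$ is analytic on $(-2,2)$ with $G_k(0)>0$; in particular each $A_k$ is analytic and vanishes to order exactly $k+1$. Dividing out the common factor $h^{r+1}$, the zeros of $M$ on $(0,\varepsilon_0)$ coincide with those of $\Phi(h)=P(h)+R(h)\,\eta(h)$, where $\eta=A_{r+1}/A_r=h\,G_{r+1}/G_r$ is analytic with $\eta(0)=0$ and $\eta'(0)>0$. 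The decisive step is to show that, as the parameters vary, the functions $M(h)/h^{r+1}$ fill a vector space that is an ECT-system on a right neighbourhood $[0,\delta)$ of the origin and has dimension exactly $n+2m-1$ when $n>0$ (respectively $m$ when $n=0$, where only the constant-coefficient combinations $\sum_{k}b_kA_k$ occur and the distinct vanishing orders $r+1,\dots,r+m$ give the Chebyshev property at once). Equivalently, I must show that the leading $n+2m-1$ Taylor coefficients of $M/h^{r+1}$ are linearly independent functionals of the free parameters while the Wronskian of the generating functions stays nonzero on $(0,\delta)$; the drop from the naive $(n+1)m$ generators to $n+2m-1$ is produced precisely by the $(n-1)(m-1)$ relations encoded in the three-term recursion. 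This ECT property yields the sharp upper bound $n+2m-2$ (resp.\ $m-1$) and, by the standard zero-placement argument for Chebyshev systems, also furnishes parameters realizing that many simple zeros clustering near $0$, proving sharpness.

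The main obstacle is this last counting step: verifying the exact dimension $n+2m-1$ and the non-vanishing of the relevant Wronskian uniformly near $h=0$. Tracking the degrees of $P$ and $R$ after the recursive elimination, and showing that the induced map from the free coefficients to the leading Taylor block has full rank $n+2m-1$ — so that neither further cancellations occur nor the Chebyshev structure degenerates — is where the real work lies; by comparison the symmetry reduction, the Picard--Fuchs relations, and the analyticity at $h=0$ are routine.
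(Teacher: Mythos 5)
Your preparatory steps are all correct: the symmetry reduction to odd powers of $y$ and even trigonometric parts, the identity $A_k'(h)=(k+\tfrac12)A_{k-1}(h)$, the three-term relation $(k+\tfrac32)A_{k+1}=(2k+2)(h-1)A_k+(k+\tfrac12)h(2-h)A_{k-1}$, and the expansion $A_k(h)=\sqrt2\,h^{k+1}G_k(h)$ with $G_k(0)>0$ all check out, and they set up a legitimate alternative framework (a Picard--Fuchs/two-generator reduction in the spirit of Gasull--Geyer--Ma\~nosas, which this paper never uses). But there is a genuine gap, and you name it yourself: the entire quantitative content of the theorem --- that the space spanned by $M(h)/h^{r+1}$ has dimension exactly $n+2m-1$, that the map from the $(n+1)m$ free coefficients to the first $n+2m-1$ Taylor coefficients at $h=0$ has full rank, and that the vanishing of these coefficients forces $M\equiv0$ --- is deferred as ``where the real work lies.'' These are not routine verifications; they \emph{are} the theorem. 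Without them you obtain neither the upper bound $n+2m-2$ nor sharpness, because a finite-dimensional space of analytic functions has no automatic Chebyshev property near $h=0$: the system $\{h^i,\ h^j\eta(h)\}$ with $\eta=A_{r+1}/A_r$ can degenerate unless one proves nondegeneracy of the Taylor coefficients of $\eta$ beyond $\eta(0)=0$, $\eta'(0)>0$, which you never compute.

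Two further warnings about your stated plan for closing the gap. First, ``tracking the degrees of $P$ and $R$'' cannot by itself produce the number $n+2m-1$: eliminating $A_{r+2},\dots,A_{r+m+n-1}$ through the recursion gives $\deg P\le 2n+m-1$ and $\deg R\le 2n+m-2$, so the naive dimension bound is $4n+2m-1$, overshooting by $3n$; the cancellations you would have to exhibit are exactly the hidden linear relations that make this step hard. Second, for comparison, the paper fills this gap with a different normal form that avoids the two-generator reduction: using the identities among $I_{i,2j+1}=\oint_{\Gamma_h}(1-\cos x)^iy^{2j+1}\mathrm{d}x$ of Lemma 2.1, it proves inductively (Lemma 3.1) that $M_m$ is a combination of the $n+m$ integrals $I_{i,2r+1}$, $0\le i\le n+m-1$, together with the $m-1$ functions $L_{n-1+i,r+m-i}$, with $n+2m-1$ \emph{independent} free coefficients; the leading asymptotics of these generators make the Jacobian of the first $n+2m-1$ expansion coefficients lower triangular up to a generalized Vandermonde block $\tilde D_{m-1}$, whose nonsingularity is established by row reduction and induction on $m$; then the rank criterion of Theorem 2.1(II) yields the bound and the sharpness simultaneously. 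Any completion of your ECT route would need an argument of exactly this strength, e.g.\ an explicit full-rank computation for the Taylor-coefficient map expressed through $G_{r+1}/G_r$.
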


\begin{remark}
Note that the integer $m$ in Theorem \ref{thm1} represents the number of odd integers in $[s_1,s_2]$.
A conjecture was presented in \cite{GGM2016} that the sharp upper bound on the number of zeros  of $M(h)$ in \eqref{h5} on the interval $(0,2)$
is $n+2m-2$ for $n>0$ or $m-1$ for $n=0$. Our result in Theorem \ref{thm1} shows that this conjecture is true for $M(h)$
in a sufficiently small neighborhood of $h=0$.
\end{remark}

    Next, for system \eqref{h3a} we consider piecewise smooth perturbations given by
    \begin{equation}\label{b12}
    \left\{\begin{array}{l}
     \!\!\dot x=y,\\[1ex]
    \displaystyle \!\!\dot y=-\sin(x)+\varepsilon Q^{\pm}(x,y),\,\, {\pm}y>0,\,\,
     \end{array}\right.
    \end{equation}
    where
    \begin{equation}\label{b13}
    Q^{+}(x,y)=\sum_{s=s_1}^{s_2}Q_{n,s}^{+}(x)y^{s},\quad Q^{-}(x,y)=\sum_{s=s_1}^{s_3}Q_{n,s}^{-}(x)y^{s},
    \end{equation}
    with positive integers $s_1$, $s_2$ and $s_3$, and $Q_{n,s}^{\pm}(x)$ are  trigonometric polynomials of degree $n$.

     For system \eqref{b12} the associated first order Melnikov function $\widetilde M(h)$ on the oscillatory region is given by
    \begin{equation}\label{b14}
     \widetilde M(h)=\int_{\Gamma_h^+}Q^{+}(x,y)\mathrm{d}x+\int_{\Gamma_h^-}Q^{-}(x,y)\mathrm{d}x,\quad h\in (0,2),
    \end{equation}
    where $\Gamma_h^{\pm}=\Gamma_h\bigcap\{\pm y>0\}$. We have the next main result given in the following theorem.

    \begin{thm}\label{thm2}
    Let \eqref{b13} and \eqref{b14} hold with $s_i\ge 1$, $i=1,2,3$, and $\hat s=\max\{s_2,s_3\}$.
    When $\widetilde  M(h) \not\equiv0$, there exists $0<\varepsilon_0\ll 1$ such that on the interval $h\in(0,\varepsilon_0)$
    $\widetilde  M(h)$ has at most $\hat s-s_1$ zeros (counting multiplicity) as $n=0$, or at most $n$ zeros (counting multiplicity)  as $n>0$ and $\hat s=s_1$,
    or at most $2(n+\hat s- s_1)-1$ zeros (counting multiplicity) as $n>0$ and $\hat s>s_1$. This upper bound is sharp.
    \end{thm}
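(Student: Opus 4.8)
The plan is to reduce $\widetilde M(h)$ to a linear combination of basic integrals with free coefficients, to show that after the substitution $h=w^2$ this combination is an analytic function of $w=\sqrt h$, and then to count its zeros near $w=0$ through a Chebyshev argument whose only serious input is the dimension of the span of the basic integrals.

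First I would exploit the symmetry of the unperturbed orbits. Writing $\cos x_h=1-h$ and $y^+(x)=\sqrt{2h-2(1-\cos x)}$, on $\Gamma_h^+$ the variable $x$ runs from $-x_h$ to $x_h$ with $y=y^+$, and on $\Gamma_h^-$ from $x_h$ to $-x_h$ with $y=-y^+$. Since $y^+$ and $\cos^i x$ are even in $x$, the terms $\sin x\,\cos^i x$ in $Q_{n,s}^{\pm}$ integrate to zero over $[-x_h,x_h]$, so only the even parts $\sum_{i=0}^n a_{i,s}^{\pm}\cos^i x$ survive. A short computation then gives
\[
\widetilde M(h)=\sum_{s=s_1}^{\hat s}\sum_{i=0}^n \beta_{i,s}\,A_{i,s}(h),\qquad A_{i,s}(h)=\int_{-x_h}^{x_h}\cos^i x\,(y^+)^s\,dx,
\]
where each $\beta_{i,s}$ (a combination such as $a_{i,s}^+-(-1)^s a_{i,s}^-$) is an independent free parameter. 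The substitution $1-\cos x=h\,t$ yields
\[
A_{i,s}(h)=2^{(s+1)/2}h^{(s+1)/2}\int_0^1 t^{-1/2}(1-ht)^i(1-t)^{s/2}(1-ht/2)^{-1/2}\,dt,
\]
so $A_{i,s}$ is $h^{(s+1)/2}$ times a function analytic at $h=0$; odd $s$ contribute integer powers of $h$ and even $s$ half-integer powers. Hence $\widetilde M(h)=F(\sqrt h)$ for an analytic $F$, and I set $V=\mathrm{span}\{A_{i,s}\}$ with $D:=\dim V$.

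The core is the computation of $D$, and two recursions do the work. From $\cos x=(1-h)+\tfrac12(y^+)^2$ one gets $A_{i,s}=(1-h)A_{i-1,s}+\tfrac12 A_{i-1,s+2}$, reducing every $A_{i,s}$ to the $i=0$ integrals $B_{s'}:=A_{0,s'}$ over $\mathbb R[h]$. Integrating $\tfrac{d}{dx}\big(\sin x\,(y^+)^{s'}\big)$ over $[-x_h,x_h]$ (the boundary term vanishes for $s'\ge1$) produces the three-term recursion
\[
\tfrac{s'+2}{4}\,B_{s'+2}=s'\,h(2-h)\,B_{s'-2}-(s'+1)(1-h)\,B_{s'},
\]
so all odd $B_{s'}$ reduce to $\{B_1,B_3\}$ and all even ones to $\{B_0,B_2\}$ over $\mathbb R[h]$. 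Because $B_{s'}\sim h^{(s'+1)/2}$ and, for fixed $s$, all $A_{i,s}$ share the same leading term $2^{(s+1)/2}B(\tfrac12,\tfrac s2+1)\,h^{(s+1)/2}$, a careful bookkeeping of these relations gives, for $n\ge1$, a contribution $n+2N_o-1$ from the odd block (when $N_o\ge1$) and $n+2N_e-1$ from the even block (when $N_e\ge1$), where $N_o,N_e$ count the odd and even $s$ in $[s_1,\hat s]$; for $n=0$ the blocks have dimensions $N_o$ and $N_e$. Since the two blocks carry integer versus half-integer powers of $h$ they are linearly independent, and summing gives $D=\hat s-s_1+1$ for $n=0$, $D=n+1$ for $n>0$ with $\hat s=s_1$ (only one block is nonempty), and $D=2(n+\hat s-s_1)$ for $n>0$ with $\hat s>s_1$.

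Finally I would count zeros. As the parameters vary, $F=\widetilde M(\,\cdot^2)$ ranges over a $D$-dimensional space $\widetilde V$ of functions analytic in $w=\sqrt h$, and every finite-dimensional space of convergent power series admits a basis $\psi_1,\dots,\psi_D$ with strictly increasing vanishing orders $c_1<\dots<c_D$ at $w=0$. Then $\psi_i^{(j)}\sim a_i\,c_i^{\underline{j}}\,w^{c_i-j}$, so the leading term of each initial Wronskian $W(\psi_1,\dots,\psi_k)$ is a nonzero Vandermonde multiple of $w^{\,c_1+\dots+c_k-\binom k2}$; hence $\{\psi_1,\dots,\psi_D\}$ is an ECT-system on $(0,\delta)$ for $\delta$ small. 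Consequently every nontrivial $F$ has at most $D-1$ zeros there counting multiplicity, and some parameter choice realizes exactly $D-1$ simple zeros; returning through the diffeomorphism $h=w^2$ yields the three stated bounds together with their sharpness. The main obstacle is the dimension count: one must exhibit exactly the $(n-1)(N_o-1)+(n-1)(N_e-1)=(n-1)(\hat s-s_1-1)$ independent relations forced by the three-term recursion and at the same time verify, using the $\mathbb R[h]$-independence of $\{B_0,B_2\}$ and $\{B_1,B_3\}$, that no further relations occur, which is where the bulk of the computation lies.
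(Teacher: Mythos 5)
Your setup is sound and in fact runs parallel to the paper's own proof: the symmetry reduction to even parts, the observation that odd $s$ produces integer powers of $h$ while even $s$ produces half-integer powers (so $\widetilde M(h)=F(\sqrt h)$ with $F$ analytic), and your two recursions (the reduction $A_{i,s}=(1-h)A_{i-1,s}+\tfrac12 A_{i-1,s+2}$ and the three-term recursion for $B_{s'}$) are exactly the role played by the paper's Lemmas 2.1 and 2.3, just written in the $\cos^i x$ basis instead of $(1-\cos x)^i$. Your ECT/Wronskian endgame is also a legitimate alternative to the paper's Theorem 2.3: the paper instead runs a Rolle-type induction on the mixed expansion in powers of $h^{1/2}$ (divide by the leading unit, differentiate), which is essentially the standard proof of the ECT bound; your version correctly yields a uniform $\varepsilon_0$ and the sharpness statement, and your claimed dimensions $D$ do match the theorem's bounds.

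The genuine gap is that the dimension count $D$ --- the mathematical core of the theorem --- is asserted rather than proved, and you say so yourself (``careful bookkeeping \dots where the bulk of the computation lies''). Two concrete holes remain on your proposed route. First, your recursions are relations with coefficients in $\mathbb{R}[h]$, not $\mathbb{R}$-linear relations among the finitely many generators $A_{i,s}$; to extract the claimed $(n-1)(\hat s-s_1-1)$ relations and conclude $\dim\,\mathrm{span}_{\mathbb{R}}\{A_{i,s}\}\le D$ one must track the exact polynomial degrees produced by iterating the reductions, which is precisely the content of the paper's Lemmas 3.1 and 3.2 (rewriting each block with genuinely free coefficients) and is not carried out in your proposal. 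Second, the lower bound on $D$ --- on which both the correctness of the stated bound and its sharpness rest --- requires that no further relations occur, and for this you invoke the $\mathbb{R}[h]$-independence of $\{B_0,B_2\}$ and $\{B_1,B_3\}$, which you never establish and which is not obvious: all these functions have half-integer orders of vanishing, so leading-order comparisons do not settle it, and one would need monodromy or asymptotics at $h=2$, or explicit series. The paper avoids this issue entirely by computing explicit expansions of $I_{i,2j+1}$ and $J_{i,2j}$ with Gamma-function coefficients (parts (III) of Lemmas 2.1 and 2.3), and then proving full rank of the coefficient maps via lower-triangular Jacobians together with a Vandermonde-type determinant reduction of the matrices $\tilde D_{m-1}$, $\widetilde D_{l-1}$. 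Until an equivalent of that computation is supplied, neither the upper bound on the number of zeros nor the sharpness claim is actually established by your argument.
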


The paper is organized as follows. In Section \ref{sec2}, we present some preliminary results on
two types of Abelian integrals $I_{i,j}(h)$ and $J_{i,j}(h)$. In Section \ref{sec3},
we shall prove Theorems \ref{thm1} and \ref{thm2} by studying the asymptotic expansions of Melnikov functions $M(h)$
and $\widetilde M(h)$ near $h=0$, respectively.

\section{Preliminaries}\label{sec2}
In this section, we obtain some identities for two types of Abelian integrals in Lemmas \ref{lm2} and \ref{lm4}
to get generating functions for Melnikov functions in \eqref{h5} and \eqref{b14}, respectively.
Furthermore, we present two theorems which are useful for determining the maximum number
of isolated zeros (counting multiplicity) for $M(h)$ and $\widetilde M(h)$ in $0<h\ll 1$.

We begin with simplifying the Melnikov function $M(h)$ in \eqref{h5}.
Because every periodic orbit $\Gamma_h$, $h\in(0,2)$,
is symmetric with respect to the $x$-axis and $y$-axis,
 we have for any $k\ge0$
     \begin{equation*}
     \begin{split}
     &\oint_{\Gamma_h}Q_{n,2k}(x)y^{2k}\mathrm{d}x\equiv0, \\
     &\oint_{\Gamma_h}Q_{n,2k+1}(x)y^{2k+1}\mathrm{d}x=\oint_{\Gamma_h}Q^e_{n,2k+1}(x)y^{2k+1}\mathrm{d}x,
     \end{split}
     \end{equation*}
where $Q_{n,2k+1}^{e}(x)$ is the even part of $Q_{n,2k+1}(x)$.
Then $M(h)$ can be simplified into the form
     \begin{equation}\label{c1}
     M(h)=\oint_{\Gamma_h}\sum_{k=r}^{r+m-1}Q_{n,2k+1}^{e}(x)y^{2k+1}\mathrm{d}x \triangleq M_m(h), \quad h\in(0,2),
     \end{equation}
where $r=[\frac{s_{1}}{2}]$ and $m=[\frac{s_{2}-2r+1}{2}]$.

Note that by \eqref{h4} the function $Q_{n,2k+1}^{e}(x)$ can be written in the form
    \begin{equation}\label{c1a}
    Q_{n,2k+1}^e(x)=\sum_{i=0}^n \tilde c_{i,2k+1}(1-\cos(x))^i,
    \end{equation}
    where the coefficients $\tilde c_{i,2k+1}$ can be taken as independent parameters.
    Substituting \eqref{c1a} into \eqref{c1} yields
    \begin{equation}\label{c1b}
    M_m(h)=\sum_{j=r}^{r+m-1}\sum_{i=0}^n \tilde c_{i,2j+1} I_{i,2j+1}(h),
    \end{equation}
    where
    \begin{equation}\label{b22}
    I_{i,j}(h)=\oint_{\Gamma_h}(1-\cos (x))^i y^j \mathrm{d}x.
    \end{equation}

   To get the algebraic structure of $M_m(h)$, for Abelian integrals $I_{i,2j+1}(h)$ we have the following lemma.

\begin{lm}\label{lm2} 
    \rm{(I)} For any integers $i\ge0$ and $r\ge 0$ we have
    \begin{equation}\label{za}
    I_{i,2r+3}=\frac{2(2r+3)}{2i+1}I_{i+1,2r+1}
    + \frac{1}{2i+1}[(i+1)I_{i+1,2r+3}-(2r+3)I_{i+2,2r+1}].
    \end{equation}

    \rm{(II)} For any integers $i\ge 0$, $r\ge 0$ and $k\ge0$ we have
    \begin{equation}\label{zb}
    \begin{split}
    I_{i,2r+3}=&\,\frac{2(2r+3)}{2i+1}I_{i+1,2r+1}
    -(2r+3)\sum_{j=0}^{j\le k-1}\frac{(i+j)!(2i-1)!!}{i!(2i+2j+3)!!}I_{i+j+2,2r+1}\\
    &    +\frac{(i+k)!(2i-1)!!}{i!(2i+2k+1)!!}[(i+k+1)I_{i+k+1,2r+3}-(2r+3)I_{i+k+2,2r+1}].
    \end{split}
    \end{equation}

    \rm{(III)} For any integers $i\ge 0$ and $j\ge0$, $I_{i,2j+1}(h)$ can be expanded as
    \begin{equation}\label{z3a}
        I_{i,2j+1}(h)=h^{i+j+1}\sum_{k=0}^{\infty}\tilde b_k b_{i,j}^k h^k,\quad 0\le h\ll 1,
    \end{equation}
    where
    \begin{equation}\label{z3b}
    \tilde b_k = \frac{2^{3-k}\Gamma(k+\frac{1}{2})}{k!\Gamma(\frac{1}{2})},\quad
    b_{i,j}^k = \frac{2^{j-1}\Gamma(i+k+\frac{1}{2})\Gamma(j+\frac{3}{2})}{\Gamma(i+j+k+2)}.
    \end{equation}
    \end{lm}

    \begin{proof} (I)  For $I_{i+1,2r+3}(h)$ we have
    \begin{equation*}
    \begin{split}
    I_{i+1,2r+3}&=\oint_{\Gamma_h}(1-\cos (x))^{i+1} y^{2r+3}\mathrm{d}x\\
    &=I_{i,2r+3}-\oint_{\Gamma_h}\cos (x)(1-\cos (x))^i y^{2r+3}\mathrm{d}x.
    \end{split}\end{equation*}
    By integration by parts, we further get
    \begin{equation*}
    \begin{split}
    I_{i+1,2r+3}&=I_{i,2r+3}-\oint_{\Gamma_h}(1-\cos (x))^i y^{2r+3}\mathrm{d}\sin (x)\\
    &=I_{i,2r+3}+i\oint_{\Gamma_h}\sin^2 (x)(1-\cos (x))^{i-1} y^{2r+3}\mathrm{d}x\\
    &\qquad\qquad\quad\,\,+(2r+3)\oint_{\Gamma_h}\sin (x)(1-\cos (x))^i y^{2r+2}\mathrm{d}y.
    \end{split}\end{equation*}
    Noting that $\sin^2 (x)=-(1-\cos (x))^2+2(1-\cos (x))$ and $\mathrm{d}y=-\frac {\sin (x)}{y}\mathrm{d}x$ along $\Gamma_h$, we obtain
    \begin{equation*}
    \begin{split}
    I_{i+1,2r+3}=&-iI_{i+1,2r+3}+(2i+1)I_{i,2r+3}
    \\    & \qquad\qquad\qquad\,\,\,
    -(2r+3)\oint_{\Gamma_h}\sin^2 (x)(1-\cos (x))^i y^{2r+1}\mathrm{d}x\\
    =&-iI_{i+1,2r+3}+(2i+1)I_{i,2r+3}\\
    &\qquad\qquad\qquad\,\,\,+(2r+3)I_{i+2,2r+1}-2(2r+3)I_{i+1,2r+1},
    \end{split}\end{equation*}
    from which we can have \eqref{za}.

    (II) We shall prove \eqref{zb} by induction on $k$.
    When $k=0$, it is evident that \eqref{zb} holds by \eqref{za}.
    Suppose that \eqref{zb} holds for $k=m\ge0$, that is,
     \begin{equation}\label{zc}
     \begin{split}
    I_{i,2r+3}=&\,\frac{2(2r+3)}{2i+1}I_{i+1,2r+1}
    -(2r+3)\sum_{j=0}^{j\le m-1}c_{i,j}I_{i+j+2,2r+1}\\
    &\quad+e_{i,m}[(i+m+1)I_{i+m+1,2r+3}-(2r+3)I_{i+m+2,2r+1}],
    \end{split}
    \end{equation}
    where
    \begin{equation}\label{zb1}
    c_{i,j}=\frac{(i+j)!(2i-1)!!}{i!(2i+2j+3)!!},\quad
    e_{i,m}=\frac{(i+m)!(2i-1)!!}{i!(2i+2m+1)!!}.
    \end{equation}
    By \eqref{za} for $I_{i+m+1,2r+3}(h)$ we have
    \begin{equation*}
    \begin{split}
    I_{i+m+1,2r+3}=&\,\frac {2(2r+3)}{2i+2m+3}I_{i+m+2,2r+1}\\
    &\qquad+\frac {(i+m+2)I_{i+m+2,2r+3}-(2r+3)I_{i+m+3,2r+1}}{2i+2m+3}.
    \end{split}
    \end{equation*}
    Then we get
    \begin{equation}\label{zd}
    \begin{split}
    &(i+m+1)I_{i+m+1,2r+3}-(2r+3)I_{i+m+2,2r+1}=\tilde c_{i,m}I_{i+m+2,2r+1}\\
    &\qquad+\frac{i+m+1}{2i+2m+3}[(i+m+2)I_{i+m+2,2r+3}-(2r+3)I_{i+m+3,2r+1}],
    \end{split}\end{equation}
    where $\tilde c_{i,m}=
    -\frac{2r+3}{2i+2m+3}$.
    Then by \eqref{zb1} we can have $e_{i,m} \tilde c_{i,m}=-(2r+3)c_{i,m}$.
    Then substituting \eqref{zd} into \eqref{zc} yields \eqref{zb} with $k=m+1$.

(III)  For $I_{i,2j+1}(h)$ we have
    \begin{equation*}
    \begin{aligned}
    I_{i,2j+1}(h)
&=\oint_{\Gamma_h}(1-\cos (x))^i y^{2j+1}\mathrm{d}x
=4\int_{0}^{x_{+}(h)}(1-\cos (x))^i y_+^{2j+1}(x,h) \mathrm{d}x\\
&=4\int_{0}^{x_{+}(h)}2^{i}\sin^{2i} \left(\frac {x}{2}\right)
\left[2\left(h-2\sin^{2}\left(\frac {x}{2}\right)\right)\right]
^{\frac {2j+1}{2}}\mathrm{d}x,
    \end{aligned}\end{equation*}
where $y_+(x,h)=\sqrt {2h-2(1-\cos (x))}$ and $x_+(h)=\arccos(1-h)$.
Substituting $u=\sqrt{2}\sin(\frac{x}{2})$ into the above integral yields
    \begin{equation*}
    I_{i,2j+1}(h)
    =2^3\int_{0}^{\sqrt{h}}u^{2i}[2(h-u^{2})]^{\frac {2j+1}{2}} (2-u^2)^{-\frac{1}{2}}\mathrm{d}u.
    \end{equation*}
Let $u=\sqrt{h}\omega$. We further obtain
    \begin{equation*}
    I_{i,2j+1}(h)
    =2^{j+3}h^{i+j+1}\int_{0}^{1}\omega^{2i}(1-\omega^{2})^{\frac {2j+1}{2}}
\left(1-\frac{\omega^2}{2}h\right)^{-\frac{1}{2}}\mathrm{d}\omega.
    \end{equation*}
    Because $(1-\frac{\omega ^2}{2}h)^{-\frac{1}{2}}$ can expanded as
    \begin{equation*}
    \left(1-\frac{\omega ^2}{2}h\right)^{-\frac{1}{2}}=1+\frac{\omega ^2}{4}h+ \cdots +\frac{2^{-k}}{k!}\frac{\Gamma(k+\frac{1}{2})}{\Gamma(\frac{1}{2})}\omega^{2k}h^k+\cdots,\quad 0\le h\ll 1,
    \end{equation*}
    we further get
    \begin{equation}\label{ze}
    I_{i,2j+1}(h)=2^{j+3}h^{i+j+1}\sum_{k=0}^{\infty}\left(\frac{2^{-k}}{k!}\frac{\Gamma(k+\frac{1}{2})}{\Gamma(\frac{1}{2})}\int_{0}^{1}\omega^{2i+2k}(1-\omega^{2})^{\frac {2j+1}{2}}\mathrm{d}\omega \right) h^k.
    \end{equation}
    Note that by \eqref{z3b} we have
    \begin{equation*}
    2^j \int_{0}^{1}\omega^{2i+2k}(1-\omega^{2})^{\frac {2j+1}{2}}\mathrm{d}\omega=b^k_{i,j}.
    \end{equation*}
    Then \eqref{ze} can be simplified into the form \eqref{z3a} with \eqref{z3b} holding.
    \end{proof}

Let $\boldsymbol{\delta}\in\mathbb{R}^N$ be a vector consisting of all the free parameters in system \eqref{b11}.
Suppose that for any $\boldsymbol{\delta}\in\mathbb{R}^N$ the Melnikov function $M(h,\boldsymbol{\delta})$ of \eqref{b11} can be expanded in the form
\begin{equation}\label{t1}
M(h,\boldsymbol{\delta})=h^K(B_0(\boldsymbol{\delta})+B_1(\boldsymbol{\delta})h + \cdots +B_i(\boldsymbol{\delta})h^i +\cdots), \quad 0\le h\ll 1,
\end{equation}
where $K$ is a positive integer constant.
We can use the coefficients in \eqref{t1} to study the number of positive zeros of $M(h,\boldsymbol{\delta})$ near $h=0$.

\begin{thm}\label{thm3}
{\rm(I)} Suppose that there exist $k\ge 1$ and $\boldsymbol{\delta}_0\in \mathbb{R}^N$ such that
\begin{equation*}
B_i(\boldsymbol{\delta}_0)=0,\,\,i=0,1,\cdots,k-1,\,\, \mbox{and}\,\, B_{k}(\boldsymbol{\delta}_0)\neq0.
\end{equation*}
Then there exist $\varepsilon_0>0$ and a neighborhood $U\subset \mathbb{R}^N$ of $\boldsymbol{\delta}_0$
such that $M(h,\boldsymbol{\delta})$ has at most $k$ positive zeros in $0<h<\varepsilon_0$ for $\boldsymbol{\delta}\in U$.

{\rm(II)} Let each coefficient $B_i(\boldsymbol{\delta})$ in \eqref{t1} be linear in $\boldsymbol{\delta}$.
Suppose that there exists $k\ge 1$ such that
\begin{equation}\label{t1a}
\begin{split}
&\mbox{\rm rank} \left(\frac{\partial(B_0,B_1,\cdots,B_{k})}{\partial(\delta_1,\cdots,\delta_N)}\right)=k+1,\\
\mbox{and}\,\, &M(h,\boldsymbol{\delta})\equiv0\,\,\mbox{when}\,\, B_i=0,\,\, i=0,\cdots,k,
\end{split}
\end{equation}
where $\boldsymbol{\delta}=(\delta_1,\cdots,\delta_N)$.
Then for any compact set $D\subseteq\mathbb{R}^N$,
there exists $\varepsilon_0>0$
such that the non-vanishing $M(h,\boldsymbol{\delta})$ has at most $k$ positive zeros (counting multiplicity) in $0<h<\varepsilon_0$ for $\boldsymbol{\delta}\in D$.
Moreover, $M(h,\boldsymbol{\delta})$ can have $k$ simple positive zeros in an arbitrary neighborhood of $h=0$
for some $\boldsymbol{\delta}$.
\end{thm}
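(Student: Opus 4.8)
The plan is to pass to $\widehat M(h,\boldsymbol{\delta}):=M(h,\boldsymbol{\delta})/h^{K}=\sum_{i\ge 0}B_i(\boldsymbol{\delta})h^{i}$, whose zeros in $0<h<\varepsilon_0$ are exactly the positive zeros of $M(h,\boldsymbol{\delta})$ there, since $h^{K}>0$. By Lemma \ref{lm2}(III) the generating Abelian integrals are analytic at $h=0$, so this series converges for $0\le h\ll1$ and may be differentiated term by term in $h$, uniformly in $\boldsymbol{\delta}$ on compact sets; moreover the coefficients $B_i$ are continuous (indeed linear) in $\boldsymbol{\delta}$.

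For part (I) I would argue by a Rolle-type descent. Since $B_i(\boldsymbol{\delta}_0)=0$ for $i<k$ and $B_k(\boldsymbol{\delta}_0)\ne0$, the $k$-th $h$-derivative satisfies $\partial_h^{k}\widehat M(0,\boldsymbol{\delta}_0)=k!\,B_k(\boldsymbol{\delta}_0)\ne0$. By continuity of $\partial_h^{k}\widehat M$ in $(h,\boldsymbol{\delta})$ there are $\varepsilon_0>0$ and a neighborhood $U$ of $\boldsymbol{\delta}_0$ on which $\partial_h^{k}\widehat M(h,\boldsymbol{\delta})\ne0$ for all $0\le h<\varepsilon_0$. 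If for some $\boldsymbol{\delta}\in U$ the function $\widehat M(\cdot,\boldsymbol{\delta})$ had more than $k$ zeros in $(0,\varepsilon_0)$ counting multiplicity, then applying the multiplicity form of Rolle's theorem $k$ times would force a zero of $\partial_h^{k}\widehat M(\cdot,\boldsymbol{\delta})$ in $(0,\varepsilon_0)$, a contradiction; hence at most $k$ positive zeros.

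For part (II) the crux is a finite reduction. The hypothesis that $M\equiv0$ whenever $B_0=\dots=B_k=0$ means that each linear functional $B_j$ with $j>k$ vanishes on $\bigcap_{i=0}^{k}\ker B_i$, so by linear duality $B_j=\sum_{i=0}^{k}\lambda_{ji}B_i$ for constants $\lambda_{ji}$. Substituting yields the representation
\[
\widehat M(h,\boldsymbol{\delta})=\sum_{i=0}^{k}B_i(\boldsymbol{\delta})\,\phi_i(h),\qquad \phi_i(h)=h^{i}+\sum_{j>k}\lambda_{ji}h^{j}=h^{i}+O\!\left(h^{k+1}\right).
\]
Evaluating at $k+1$ parameter vectors for which the matrix $(B_i)$ is invertible (possible by the rank condition \eqref{t1a}) exhibits each $\phi_i$ as a fixed finite combination of convergent series $\widehat M(\cdot,\boldsymbol{\delta}^{(\ell)})$, so the $\phi_i$ are analytic on a fixed interval $[0,\varepsilon_1)$ independent of $\boldsymbol{\delta}$. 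I would then show that $\{\phi_0,\dots,\phi_k\}$ is an extended complete Chebyshev system on $(0,\varepsilon_0)$ for small $\varepsilon_0$: the Wronskian $W(\phi_0,\dots,\phi_j)(h)$ tends to $W(1,h,\dots,h^{j})=\prod_{i=0}^{j}i!\ne0$ as $h\to0^{+}$, because the corrections in $\phi_i$ are of order $h^{k+1}$, higher than the monomials $1,h,\dots,h^{j}$ for $j\le k$; hence each $W(\phi_0,\dots,\phi_j)$ is nonzero on a common punctured neighborhood of $0$. Consequently every nontrivial linear combination $\sum_{i=0}^{k}B_i\phi_i$ has at most $k$ zeros counting multiplicity in $(0,\varepsilon_0)$, and non-vanishing $M$ forces $(B_0,\dots,B_k)\ne0$; since $\varepsilon_0$ is independent of $\boldsymbol{\delta}$, the bound is uniform, in particular on any compact $D$.

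For the sharpness claim I would use the rank condition a second time: $\boldsymbol{\delta}\mapsto(B_0,\dots,B_k)$ is onto $\mathbb{R}^{k+1}$, so for each small $\rho>0$ I can choose $\boldsymbol{\delta}$ with $B_i(\boldsymbol{\delta})=\rho^{-i}\gamma_i$. Rescaling $h=\rho w$ in the representation gives, for the function $w\mapsto\widehat M(\rho w,\boldsymbol{\delta})$, the expansion $\widehat M(\rho w,\boldsymbol{\delta})=Q(w)+O(\rho)$ in $C^{1}([0,1])$, where $Q(w)=\sum_{i=0}^{k}\gamma_i w^{i}$ and the remainder is $O(\rho)$ because every tail exponent satisfies $j-i\ge1$. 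Choosing the $\gamma_i$ so that $Q$ has $k$ distinct simple roots in $(0,1)$, this $C^{1}$-small perturbation has $k$ simple zeros near those roots by the implicit function theorem, i.e.\ $M(\cdot,\boldsymbol{\delta})$ has $k$ simple positive zeros in $(0,\rho)$; letting $\rho\to0$ places them in any prescribed neighborhood of $h=0$. The main obstacle is the upper bound in (II): everything rests on the finite reduction $\widehat M=\sum_{i\le k}B_i\phi_i$ together with the Chebyshev (nonvanishing-Wronskian) property near $h=0$, and it is precisely the monomial leading behavior $\phi_i\sim h^{i}$ that both yields this property and makes $\varepsilon_0$ uniform in $\boldsymbol{\delta}$.
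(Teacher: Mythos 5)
Your proposal is correct, and while part (I) is essentially the paper's argument, part (II) takes a genuinely different route after the first step. For (I), the paper runs the same Rolle descent, but by contradiction along sequences $(\boldsymbol{\delta}_n,h_{nj})\to(\boldsymbol{\delta}_0,0)$, whereas you phrase it contrapositively via joint continuity of $\partial_h^k\widehat M$; both versions need exactly the same regularity (linearity in $\boldsymbol{\delta}$ and analyticity in $h$ from Lemma \ref{lm2}(III)), which you in fact justify more explicitly than the paper does. For (II), the common step is the finite reduction $\widehat M=\sum_{i=0}^{k}B_i\phi_i$ with $\phi_i(h)=h^i\bigl(1+O(h)\bigr)$: the paper gets it by explicitly solving $B_j(\boldsymbol{\delta})=B_j$, $j\le k$, for $k+1$ of the parameters and showing the higher coefficients then depend only on $B_0,\dots,B_k$, while your linear-duality argument (each $B_j$, $j>k$, vanishes on $\bigcap_{i\le k}\ker B_i$, hence lies in their span) is a cleaner route to the same decomposition, and your finite-evaluation trick pins down convergence of the $\phi_i$, a point the paper passes over by invoking analyticity of $M$. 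From there the methods diverge: the paper bounds the zeros by a self-contained derivation--division induction (divide by $1+P_0(h)$, differentiate, recurse), whereas you verify the ECT property from the Wronskian limits $W(\phi_0,\dots,\phi_j)(h)\to\prod_{i\le j}i!\neq 0$ as $h\to0^{+}$ and cite the Chebyshev-system zero bound; this buys an $\varepsilon_0$ uniform over all of $\mathbb{R}^N$ (stronger than the compact-$D$ statement) at the cost of invoking the Karlin--Studden theorem rather than proving everything from scratch. For sharpness, the paper only remarks that the functions $f_j(h)=h^j(1+P_j(h))$ are independent so suitable $B_j$ exist; your blow-up $h=\rho w$ with $B_i=\rho^{-i}\gamma_i$, convergence in $C^1([0,1])$ to a polynomial $Q$ with $k$ simple roots, and stability of simple zeros is an explicit, fully rigorous version of that one-line claim.
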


\begin{remark} If the conditions in the statement (I) of Theorem \ref{thm3} hold,
Theorem 2.4.1 in \cite{Han2013} shows that system \eqref{b11} has at most $k+K-1$ limit cycles
in a neighborhood of the origin for $\varepsilon$ sufficiently small and $\boldsymbol{\delta}$
 sufficiently close to $\boldsymbol{\delta}_0$.
Furthermore, when $K\geq 2$, the number of limit cycles bifurcating around the origin may be greater
than the number of positive zeros of $M(h,,\boldsymbol{\delta})$. An example is presented to illustrate this phenomenon in \cite{HLY2018}.
\end{remark}

Following the idea in the proof of Theorem 2.3.2 in \cite{Han2013}, we can similarly prove Theorem \ref{thm3}.
For the completeness of the paper, we present the proof below.

\begin{proof} (I)  We shall prove the statement (I) by contradiction.
Assume that for any $\varepsilon_0>0$ and an arbitrary neighborhood $U$ of $\boldsymbol{\delta}_0$,
$M(h,\boldsymbol{\delta})$ has $k+1$ positive zeros in $0<h<\varepsilon_0$ for some $\boldsymbol{\delta}\in U$.
Then there exist sequences $\boldsymbol{\delta}_n\rightarrow \boldsymbol{\delta}_0$ and $h_{nj}\rightarrow 0,\,\,j=1,2,\cdots ,k+1$ as $n\rightarrow \infty$
such that $\overline M(h_{nj},\boldsymbol{\delta}_n)=0$, where $\overline M(h,\boldsymbol{\delta})=M(h,\boldsymbol{\delta})/h^K$.
By Rolle's theorem $\frac {\partial \overline M}{\partial h}(h,\boldsymbol{\delta}_n)$ has $k$ positive zeros near $h=0$.
Then applying Rolle's theorem repeatedly, we can see that $\frac {\partial^k \overline M}{\partial h^k}(h,\boldsymbol{\delta}_n)$ has a positive zero $h_n^*$ and $h_n^*\rightarrow 0$ as $n\rightarrow \infty$. Then we have $\frac {\partial^k \overline M}{\partial h^k}(0,\boldsymbol{\delta}_0)=0$. From \eqref{t1} we get that
\begin{equation*}
    \overline M(h,\boldsymbol{\delta})
    =B_0(\boldsymbol{\delta})+B_1(\boldsymbol{\delta})h+\cdots+B_k(\boldsymbol{\delta})h^k+\cdots,
    \end{equation*}
which implies $\frac {\partial^k \overline M}{\partial h^k}(0,\boldsymbol{\delta}_0)=k!B_k(\boldsymbol{\delta}_0)\neq0$. This is a contradiction.
Therefore, there exist $\varepsilon_0>0$ and a neighborhood $U$ of $\boldsymbol{\delta}_0$
such that $M(h,\boldsymbol{\delta})$ has at most $k$ positive zeros in $0<h<\varepsilon_0$ for $\boldsymbol{\delta}\in U$.

(II) Without loss of generality, we suppose
\begin{equation}\label{t2}
  \mbox{\rm rank} \left(\frac{\partial(B_0,B_1,\cdots,B_{k})}{\partial(\delta_1,\delta_2,\cdots,\delta_{k+1})}\right)=k+1.
     \end{equation}
Because  in \eqref{t1} all coefficients $B_i(\boldsymbol{\delta})$, $i\geq0$ are liner in
 $\boldsymbol{\delta}$, by \eqref{t2} we can get linear combinations
 $\delta_i=\tilde{\delta_i}(B_0,B_1,\cdots,B_k,\delta_{k+2},\cdots,\delta_N),\,\, i=1,2,\cdots,k+1$ from the equations $B_j(\boldsymbol{\delta})=B_j,\,\,j=0,1,\cdots,k$.
Substituting $\delta_i=\tilde{\delta_i},\,\,i=1,2,\cdots,k+1$
 into $B_j(\boldsymbol{\delta}),\,\,j\geq k+1$ yields linear combinations $B_j(\boldsymbol{\delta})=\widetilde B_j(B_0,B_1,\cdots,B_k,\delta_{k+2},\cdots,\delta_N)$.
Then by \eqref{t1a} for any $j\geq k+1$,
\[\widetilde B_j(0,0,\cdots,0,\delta_{k+2},\cdots,\delta_N)\equiv 0,\]
which implies $\widetilde B_j$ does not depends on $\delta_{k+2},\cdots,\delta_N$.
Then $B_j(\boldsymbol{\delta})$ can be written as $B_j(\boldsymbol{\delta})=\widetilde B_j(B_0,B_1,\cdots,B_k)$. Because $M(h,\boldsymbol{\delta})$ is analytic in $h$, from \eqref{t1} we have $M(h,\boldsymbol{\delta})=h^K\overline M(h)$, where
\begin{equation}\label{t3}
  \overline M(h)=\sum_{j=0}^{k}B_jh^j(1+P_j(h)),
     \end{equation}
and $P_j(h)=O(h)\in {C^\omega}$. Then to prove the statement (II) we only need to prove $\overline M_1(h)= \overline M(h)/(1+P_0(h))$ has at most $k$ positive zeros in $0<h<\varepsilon_0$.

Note that by \eqref{t3} we have
\begin{equation}\label{t4}
  \overline M_1(h)=B_0+\sum_{j=1}^{k}B_jh^j(1+\widetilde P_j(h)),
     \end{equation}
where $\widetilde P_j(h)=\frac {1+P_j(h)}{1+P_0(h)}-1=\frac {P_j(h)-P_0(h)}{1+P_0(h)}=O(h)$.
Note that for $k=0$, the non-vanishing constant function $\overline M_1(h)=B_0$ has none zeros in $h$. For $k\geq1$ by \eqref{t4} we have
\begin{equation}\label{t5}
 \frac {\partial \overline M_1}{\partial h}=\sum_{j=1}^{k}jB_jh^{j-1}(1+P_{2j}(h))
 =\overline M_2(h)(1+P_{21}(h)),
     \end{equation}
where $P_{2j}(h)=\widetilde P_{j}(h)+\frac {1}{j}h\widetilde P_{j}^{'}(h)=O(h)$.
Then it suffices to show $\overline M_2(h)$ has at most $k-1$ positive zeros,
where by \eqref{t5} we get
\begin{equation*}
 \overline M_2(h)=\frac {\overline M_{1h}(h)}{1+P_{21}(h)}=B_1+\sum_{j=2}^{k}jB_j h^{j-1}(1+\widetilde P_{2j}(h)).
     \end{equation*}
Because $\overline M_1(h)$ and $\overline M_2(h)$ have the same form, we can use induction on $k$ to prove the conclusion for $k\geq 2$.

Because functions $f_j(h)=h^j(1+P_j(h))$ are independent, there exist proper values for $B_0,B_1,\cdots,B_k$ such that $\overline M_1(h)=\sum_{j=0}^{k}B_jf_j(h)$ has $k$ positive zeros near $h=0$, which implies that $M(h)$ can have $k$ positive zeros in $0<h<\varepsilon_0$ for some $\boldsymbol{\delta}$.
\end{proof}

     Next we consider the piecewise smooth perturbations given in \eqref{b12}.
     Substituting \eqref{b13} into \eqref{b14} yields
     \begin{equation}\label{t5a}
     \widetilde M(h)= \sum_{s=s_1}^{s_2} \int_{\Gamma^+_h} Q_{n,s}^{+}(x)y^{s}\mathrm{d}x
         + \sum_{s=s_1}^{s_3}\int_{\Gamma^-_h}Q_{n,s}^{-}(x)y^{s}\mathrm{d}x.
     \end{equation}
     For any integer $s\ge0$ we have
     \begin{equation*}
     \begin{split}
     & \int_{\Gamma_h^-}Q_{n,s}^{-}(x)y^{s}\mathrm{d}x=(-1)^{s+1}\int_{\Gamma_h^+}Q_{n,s}^{-}(x)y^{s}\mathrm{d}x\,\\
     \mbox{and} \,\, &  \int_{\Gamma_h^\pm}f(x)y^{s}\mathrm{d}x\equiv 0\,\,\mbox{for odd continuous functions} \,\, f(x).
     \end{split}
     \end{equation*}
     Then $\widetilde M(h)$ in \eqref{t5a} can be changed into the form
     \begin{equation*}
     \widetilde M(h)=\sum_{s=s_1}^{\hat s}\int_{\Gamma_h^+}\widetilde{Q}_{n,s}^{e}(x)y^{s}\mathrm{d}x,
     \end{equation*}
     where $\hat s=\max\{s_2,s_3\}$, and $\widetilde{Q}_{n,s}^{e}(x)$ is the even part of ${Q}^+_{n,s}(x)+(-1)^{s+1}{Q}^-_{n,s}(x)$
     with $Q_{n,s}^+=0$ for $s_2<s\le \hat s$, and $Q_{n,s}^-=0$ for $s_3<s\le \hat s$.
     Note that $\widetilde Q_{n,s}^e(x)$ can be written in the form
     \begin{equation*}
     \widetilde Q_{n,s}^e(x)= \sum_{i=0}^n b_{i,s}(1-\cos(x))^i,
     \end{equation*}
     where all the coefficients $b_{i,s}$'s are free parameters.
    Then $\widetilde M(h)$ can be simplified into
      \begin{equation}\label{t6}
     \begin{aligned}
     \widetilde M(h)&=\sum_{s=s_1}^{\hat s}\int_{\Gamma_h^+}\widetilde{Q}_{n,s}^{e}(x)y^{s}\mathrm{d}x
     = \sum_{s=s_1}^{\hat s}\sum_{i=0}^n b_{i,s}J_{i,s}(h),
     \end{aligned}\end{equation}
     where
     \begin{equation}\label{zef}
     J_{i,s}(h)=\int_{\Gamma_h^+}(1-\cos(x))^iy^s\mathrm{d}x.
     \end{equation}
     Note that by \eqref{b22} and \eqref{zef} $J_{i,2r+1}(h)=\frac{1}{2}I_{i,2r+1}(h)$ for any $r\ge0$. For $J_{i,2r}(h)$ we have the following lemma.

\begin{lm}\label{lm4}
    \rm{(I)} For any integers $i\ge0$ and $r\ge 1$ we have
 \begin{equation}\label{zf}
     J_{i,2r+2}=\frac{2(2r+2)}{2i+1} J_{i+1,2r}
    + \frac{1}{2i+1}[(i+1) J_{i+1,2r+2}-(2r+2) J_{i+2,2r}].
    \end{equation}
    \rm{(II)} For any integers $i\ge 0$ and $k\ge0$ we have
    \begin{equation}\label{zg}
    \begin{split}
     J_{i,2r+2}=&\,\frac{2(2r+2)}{2i+1} J_{i+1,2r}
    -(2r+2)\sum_{j=0}^{j\le k-1}\frac{(i+j)!(2i-1)!!}{i!(2i+2j+3)!!} J_{i+j+2,2r}\\
    &    +\frac{(i+k)!(2i-1)!!}{i!(2i+2k+1)!!}[(i+k+1) J_{i+k+1,2r+2}-(2r+2) J_{i+k+2,2r}].
    \end{split}
    \end{equation}

    \rm{(III)} For any integers $i\ge 0$ and $j\ge1$, ${J}_{i,2j}(h)$ can be expanded as
    \begin{equation}\label{zh1}
        {J}_{i,2j}(h)=h^{i+j+\frac{1}{2}}\sum_{k=0}^{\infty} \tilde b_k c_{i,j}^k h^k,\quad 0\le h\ll 1,
    \end{equation}
    where $\tilde b_k$ is given in \eqref{z3b}, and
    \begin{equation}\label{zh2}
    c_{i,j}^k = \frac{2^{j-\frac{5}{2}}\Gamma(i+k+\frac{1}{2})\Gamma(j+1)}{\Gamma(i+j+k+\frac{3}{2})}.
    \end{equation}
\end{lm}

\begin{proof} (I)  The identity \eqref{zf} can be similarly proved as in the proof of \eqref{za} in Lemma \ref{lm2}.
For $ J_{i+1,2r+2}(h)$ we have
    \begin{equation}\label{zi}
    \begin{split}
     J_{i+1,2r+2}(h)&=\int_{\Gamma_h^+}(1-\cos (x))^{i+1} y^{2r+2}\mathrm{d}x\\
    &= J_{i,2r+2}(h)-\int_{\Gamma_h^+}\cos (x)(1-\cos (x))^i y^{2r+2}\mathrm{d}x.
    \end{split}\end{equation}
    By using the method of integration by parts, we get
    \begin{equation*}
    \begin{split}
    &\int_{\Gamma_h^+}\cos (x)(1-\cos (x))^i y^{2r+2}\mathrm{d}x\\
    =&\, (1-\cos (x))^i y^{2r+2}\sin (x)\bigg\arrowvert_{\Gamma_h^+}
    -\int_{\Gamma_h^+}\sin (x)\mathrm{d}\left[(1-\cos (x))^i y^{2r+2}\right]\\
    =&\, -i\int_{\Gamma_h^+}\sin^2 (x)(1-\cos (x))^{i-1} y^{2r+2}\mathrm{d}x\\
    &\qquad-(2r+2)\int_{\Gamma_h^+}\sin (x)(1-\cos (x))^i y^{2r+1}\mathrm{d}y.
    \end{split}\end{equation*}
    Noting that $\mathrm{d}y=-\frac {\sin (x)}{y}\mathrm{d}x$ along $\Gamma_h^+$ and $\sin^2 (x)=-(1-\cos (x))^2+2(1-\cos (x))$, we obtain
    \begin{equation}\label{zj}
    \begin{split}
     &\int_{\Gamma_h^+}\cos (x)(1-\cos (x))^i y^{2r+2}\mathrm{d}x\\
    =&\,iJ_{i+1,2r+2}-2iJ_{i,2r+2}-(2r+2)(J_{i+2,2r}-2J_{i+1,2r}).
    \end{split}\end{equation}
    Then from \eqref{zi} and \eqref{zj} we can have \eqref{zf}.

(II) As in the proof for \eqref{zb}, we can use the method of induction on $k$ to prove \eqref{zg}.
The details of the proof are omitted here.

(III) We can obtain \eqref{zh1} for $J_{i,2j}(h)$ by the same way used in the proof for \eqref{z3a}. 
For $J_{i,2j}(h)$ we have
    \begin{equation*}
    \begin{aligned}
    J_{i,2j}(h)
&=\int_{\Gamma_h^+}(1-\cos (x))^i y^{2j}\mathrm{d}x
=2\int_{0}^{x_{+}(h)}(1-\cos (x))^i y_+^{2j}(x) \mathrm{d}x\\
&=2\int_{0}^{x_{+}(h)}2^{i}\sin^{2i} \left(\frac {x}{2}\right)
\left[2\left(h-2\sin^{2}\left(\frac {x}{2}\right)\right)\right]
^{j}\mathrm{d}x,
    \end{aligned}\end{equation*}
where $x_+(h)=\arccos(1-h)$.
By the variable transformation $\sqrt{h}\omega=\sqrt{2}\sin(\frac{x}{2})$ we further obtain
    \begin{equation}\label{ze1}
    \begin{aligned}
    J_{i,2j}(h)&=2^{j+\frac {3}{2}}h^{i+j+\frac {1}{2}}\int_{0}^{1}\omega^{2i}(1-\omega^{2})^{j}
\left(1-\frac{\omega^2}{2}h\right)^{-\frac{1}{2}}\mathrm{d}\omega\\
    &=2^{j-\frac {3}{2}}h^{i+j+\frac {1}{2}}
    \sum_{k=0}^{\infty}\tilde b_k\int_{0}^{1}\omega^{2i+2k}(1-\omega^{2})^{j}\mathrm{d}\omega  h^k.
    \end{aligned}\end{equation}
    Note that for $c^k_{i,j}$ in \eqref{zh2} we have
    \begin{equation*}
    2^{j-\frac {3}{2}} \int_{0}^{1}\omega^{2i+2k}(1-\omega^{2})^{j}\mathrm{d}\omega=c^k_{i,j}.
    \end{equation*}
    Then \eqref{ze1} can be simplified into the form \eqref{zh1}. The proof is completed.
\end{proof}

Let $\boldsymbol{\delta}\in\mathbb{R}^N$ be a vector consisting of all the parameters in system \eqref{b12}.
By \cite{LiuHan2010} for any $\boldsymbol{\delta}\in\mathbb{R}^N$ the Melnikov function $\widetilde M(h,\boldsymbol{\delta})$ of \eqref{b12} can be expanded in the form
\begin{equation}\label{zk}
\widetilde M(h,\boldsymbol{\delta})=h^K(B_0(\boldsymbol{\delta})+B_1(\boldsymbol{\delta})h^{\frac{1}{2}} \cdots +B_i(\boldsymbol{\delta})h^{\frac{i}{2}}+\cdots), \quad 0\le h\ll 1,
\end{equation}
where $2K$ is a positive integer.
We can use the coefficients in \eqref{zk} to study the number of positive zeros of $\widetilde M(h,\boldsymbol{\delta})$ near $h=0$.

\begin{thm}\label{thm4}
Let each coefficient $B_i(\boldsymbol{\delta})$ in \eqref{zk} be linear in $\boldsymbol{\delta}$.
Suppose that there exist natural numbers $k_1$ and $k_2$ such that for $i>k_1$ and $j>k_2$
\begin{equation}\label{zl}
\begin{split}
&B_{2i}=O(|B_0,B_2,\cdots,B_{2k_1}|),\,\quad B_{2j-1}=O(|B_1,B_3,\cdots,B_{2k_2-1}|),\\
&\mbox{\rm rank} \left(\frac{\partial(B_0,B_2,\cdots,B_{2k_1},B_1,B_3,\cdots,B_{2k_2-1})}{\partial(\delta_1,\cdots,\delta_N)}\right)=k_1+k_2+1,\,\,
\end{split}
\end{equation}
where $\boldsymbol{\delta}=(\delta_1,\cdots,\delta_N)$.
Then for any compact set $D\subseteq\mathbb{R}^N$,
when $\widetilde M(h,\boldsymbol{\delta})\not\equiv0$,
there exists $\varepsilon_0>0$
such that the non-vanishing $\widetilde M(h,\boldsymbol{\delta})$ has at most $k_1+k_2$ positive zeros in $0<h<\varepsilon_0$ for $\boldsymbol{\delta}\in D$.
Moreover, $\widetilde M(h, \boldsymbol{\delta})$ can have $k_1+k_2$ positive simple zeros in an arbitrary neighborhood of $h=0$
for some $\boldsymbol{\delta}$.
\end{thm}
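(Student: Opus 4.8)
The plan is to reduce Theorem \ref{thm4} to the integer-power situation already treated in the proof of Theorem \ref{thm3}(II). \emph{First}, since \eqref{zk} is a power series in $\sqrt{h}$, I would introduce the variable $t=\sqrt{h}$ (so $h=t^{2}$) and work with
\[
\overline M(t,\boldsymbol\delta):=\widetilde M(h,\boldsymbol\delta)/h^{K}=\sum_{n\ge0}B_n(\boldsymbol\delta)\,t^{\,n},\qquad 0\le t\ll1.
\]
Because $t=\sqrt{h}$ is an increasing bijection of $(0,\varepsilon_0)$ onto $(0,\sqrt{\varepsilon_0})$ and $h^{K}=t^{2K}$ never vanishes for $t>0$ (recall $2K\in\mathbb N$), the positive zeros of $\widetilde M(\cdot,\boldsymbol\delta)$ in $0<h<\varepsilon_0$ correspond bijectively, with matching multiplicities, to the positive zeros of $\overline M(\cdot,\boldsymbol\delta)$ in $0<t<\sqrt{\varepsilon_0}$. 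It therefore suffices to bound the latter.

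\emph{Second}, I would exploit the even/odd decoupling built into \eqref{zl}. The even-index coefficients $B_{2i}$ contribute only even powers of $t$, and by the first relation in \eqref{zl} every $B_{2i}$ with $i>k_1$ lies in the linear span of $B_0,B_2,\dots,B_{2k_1}$; symmetrically the odd-index coefficients $B_{2j-1}$ contribute only odd powers and, for $j>k_2$, lie in the span of $B_1,B_3,\dots,B_{2k_2-1}$. Since all $B_n$ are linear in $\boldsymbol\delta$ and, by the rank hypothesis in \eqref{zl}, the $k_1+k_2+1$ functionals $B_0,\dots,B_{2k_1},B_1,\dots,B_{2k_2-1}$ are linearly independent, these may be taken as free parameters. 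Collecting terms gives
\[
\overline M(t,\boldsymbol\delta)=\sum_{i=0}^{k_1}B_{2i}\,f_{2i}(t)+\sum_{j=1}^{k_2}B_{2j-1}\,g_{2j-1}(t),
\]
where $f_{2i}(t)=t^{2i}(1+O(t))$ and $g_{2j-1}(t)=t^{2j-1}(1+O(t))$ are fixed convergent series independent of $\boldsymbol\delta$; in particular $\overline M\equiv0$ exactly when all $k_1+k_2+1$ essential coefficients vanish, consistent with the $O(\cdot)$ hypotheses. Relabelling these $k_1+k_2+1$ generators by their pairwise distinct leading orders $\alpha_0<\alpha_1<\cdots<\alpha_m$ (with $m:=k_1+k_2$) yields $\overline M(t)=\sum_{\ell=0}^{m}c_\ell\,\Phi_\ell(t)$ with $\Phi_\ell(t)=t^{\alpha_\ell}(1+O(t))$ and independent linear coefficients $c_\ell$.

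\emph{Third}, I would run the derivative--division--Rolle induction used for Theorem \ref{thm3}(II), which needs only that the leading orders be distinct and increasing, and so tolerates the gaps produced here by interleaving even and odd exponents. Factoring out $t^{\alpha_0}$ and dividing by the ($\boldsymbol\delta$-independent) unit $\Phi_0(t)/t^{\alpha_0}$ reduces $\overline M$ to $N_1(t)=c_0+\sum_{\ell\ge1}c_\ell t^{\alpha_\ell-\alpha_0}(1+O(t))$ with the same zeros near $t=0$; then $N_1'$ factors as $t^{\alpha_1-\alpha_0-1}$ (with nonnegative exponent) times a function of the same shape having one fewer term, so by induction it has at most $m-1$ positive zeros near $0$, whence Rolle's theorem gives $N_1$ at most $m=k_1+k_2$. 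Because the division factors arising at each of the finitely many steps are fixed and $\boldsymbol\delta$-independent, a single $\varepsilon_0>0$ serves for all $\boldsymbol\delta$, so the bound is in particular uniform on any compact $D$. For sharpness, the distinct leading orders make $\Phi_0,\dots,\Phi_m$ linearly independent, so a suitable choice of the $c_\ell$ (hence of $\boldsymbol\delta$, via the rank condition) produces $m$ simple positive zeros arbitrarily close to $t=0$, exactly as in the sharpness part of Theorem \ref{thm3}(II); these pull back to $m$ simple positive zeros near $h=0$.

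\emph{Main obstacle.} The one genuinely new point relative to Theorem \ref{thm3}(II) is the bookkeeping of the even/odd splitting: I must verify that the two $O(\cdot)$ relations in \eqref{zl} really decouple $\overline M$ into an even and an odd family of fixed generators, and that after sorting the mixed leading orders $\{0,2,\dots,2k_1\}\cup\{1,3,\dots,2k_2-1\}$ the Rolle induction still applies despite the missing intermediate exponents. Confirming the $\boldsymbol\delta$-uniformity of $\varepsilon_0$ is then routine once the $\Phi_\ell$ are seen to be independent of $\boldsymbol\delta$, but it should be stated with care.
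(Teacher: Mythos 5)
Your proposal is correct, and it rests on the same core mechanism as the paper: pass to the variable $\rho=\sqrt h$, use linearity of the $B_i$ together with \eqref{zl} to rewrite $\overline M$ as a combination of $k_1+k_2+1$ \emph{fixed}, $\boldsymbol\delta$-independent generators of the form $\rho^{\alpha}(1+O(\rho))$, and then run a divide--differentiate--Rolle induction whose uniformity in $\boldsymbol\delta$ comes from the generators being fixed. Where you genuinely depart from the paper is in the organization of that induction. The paper never merges the two parity families: assuming without loss of generality $k_2\ge k_1$, it inducts on $k_1$, and each inductive step applies the divide-and-differentiate operation \emph{twice}, so that the resulting function $\widehat M'_{k+1}$ is again of the mixed even/odd form with $(k_1,k_2)$ replaced by $(k,k_2-1)$; the base case $k_1=0$ uses a single differentiation to turn the odd-power sum into an integer-power sum, which is then dispatched by the argument already given for Theorem \ref{thm3}. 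You instead pool all exponents $\{0,2,\dots,2k_1\}\cup\{1,3,\dots,2k_2-1\}$, sort them into one strictly increasing list, and induct on the total number of generators with one differentiation per step. Your scheme is cleaner and slightly more general: it needs only that the leading orders be distinct (not that they interleave as evens and odds), it eliminates the paper's case split $k_2\ge k_1$ versus $k_1>k_2$, it tracks multiplicities naturally through Rolle, and it subsumes the argument of Theorem \ref{thm3}(II) rather than quoting it. The price is that you must verify (as you do) that a single differentiation preserves the ``distinct nonnegative integer leading exponents'' shape even when the exponent gaps are irregular, whereas the paper's double-derivative step keeps the parity structure intact so its earlier machinery applies verbatim. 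Your treatment of the remaining points --- the reduction of the $O(|\cdot|)$ hypotheses to span inclusions via linearity, the equivalence $\overline M\equiv0$ iff all essential coefficients vanish, the use of the rank condition for realizability, and the sharpness construction --- matches the paper's level of rigor, so I see no gap.
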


\begin{proof}
Here we only give the proof for the case $k_2\ge k_1$. The other case $k_1>k_2$ can be similarly proved.

Let $\rho=\sqrt{h}$. As in the proof of Theorem \ref{thm3} $\widetilde M(\rho^2,\boldsymbol{\delta})$ can be written as
$\widetilde M(\rho^2,\boldsymbol{\delta})=\rho^{2K}\overline M(\rho)$, where
\begin{equation}\label{zl1}
\overline M(\rho)=\sum_{i=0}^{k_1}B_{2i}\rho^{2i}(1+P_{2i}(\rho^2))+\sum_{i=1}^{k_2} B_{2i-1}\rho^{2i-1}(1+P_{2i-1}(\rho^2)),
\end{equation}
where $P_j(\rho^2)=O(\rho^2)\in C^{\omega}$.
We shall use induction on $k_1$ to prove that $\overline M(\rho)$ has at most $k_1+k_2$ zeros in $0<\rho<\sqrt{\varepsilon_0}$.

When $k_1=0$, for any $k_2\ge 0$ by \eqref{zl1} we have
\begin{equation*}
\overline M(\rho)=B_0(1+P_{0}(\rho^2))+\sum_{i=1}^{k_2} B_{2i-1}\rho^{2i-1}(1+P_{2i-1}(\rho^2)),
\end{equation*}
which yields
\begin{equation}\label{zl2}
\frac{\overline M(\rho)}{1+P_0(\rho)}=B_0+\sum_{i=1}^{k_2} B_{2i-1}\rho^{2i-1}(1+\widetilde P_{2i-1}(\rho^2))\triangleq \widetilde M_0(\rho),
\end{equation}
where $\widetilde P_{2i-1}(\rho^2)=\frac{P_{2i-1}(\rho^2)-P_0(\rho^2)}{1+P_0(\rho^2)}=O(\rho^2)$.
Clearly, for $k_2=0$ $\widetilde M_0(\rho)$ has none positive zeros near $\rho=0$ when $\widetilde M(h)\not\equiv0$.
For $k_2\ge1$ we have
\begin{equation*}
\widetilde M'_0(\rho)= \sum_{i=1}^{k_2} (2i-1)B_{2i-1}\rho^{2i-2}(1+\overline P_{2i-1}(\rho^2)),
\end{equation*}
where $\overline P_{2i+1}(\rho^2)=\widetilde P_{2i-1}(\rho^2)+\frac{2}{2i-1}\rho^2\widetilde P'_{2i-1}(\rho^2)=O(\rho^2)$.
By the proof of Theorem \eqref{thm3} we get that $\widetilde M'_0(\rho)$ has at most $k_2-1$ positive zeros in $\rho^2$ near $\rho=0$,
which implies by \eqref{zl2} that $\widetilde M(\rho)$ has at most $k_2$ positive zeros in $0<\rho<\sqrt{\varepsilon_0}$.

Suppose that for $k_1=k\ge 0$ and $k_2\ge k$ the function $\overline M(\rho)$ has at most $k+k_2$ positive zeros.
Let $\widetilde M_{k+1}(\rho)=\frac{\overline M(\rho)}{1+P_0(\rho)}$.
Then for $k_1=k+1$ and $k_2\ge k$, by \eqref{zl1} we have
\begin{equation*}\label{zl3}
\widetilde M_{k+1}(\rho)=B_0+\sum_{i=1}^{k+1}B_{2i}\rho^{2i}(1+\widetilde P_{2i}(\rho^2))
+\sum_{i=1}^{k_2} B_{2i-1}\rho^{2i-1}(1+\widetilde P_{2i+1}(\rho^2)),
\end{equation*}
where $\widetilde P_{2i}(\rho^2)=\frac{P_{2i}(\rho^2)-P_0(\rho^2)}{1+P_0(\rho^2)}=O(\rho^2)$.
Let $\widehat M_{k+1}(\rho)= \frac{\widetilde M'_{k+1}(\rho)}{1+\overline P_1(\rho^2)}$.
Similarly, we have
\begin{equation*}
 \widehat M_{k+1}(\rho)=B_1+\sum_{i=1}^{k_2-1}(2i+1)B_{2i+1}\rho^{2i}(1+O(\rho^2))
+\sum_{i=1}^{k+1}2i B_{2i}\rho^{2i-1}(1+O(\rho^2)),
\end{equation*}
which yields
\begin{equation*}
\begin{split}
 \widehat M'_{k+1}(\rho)=& \sum_{i=0}^{k}(2i+2)(2i+1) B_{2i+2}\rho^{2i}(1+O(\rho^2))\\
&\qquad+ \sum_{i=1}^{k_2-1}2i(2i+1)B_{2i+1}\rho^{2i-1}(1+O(\rho^2)).
\end{split}
\end{equation*}
By the assumption of $\overline M(\rho)$ for $k_1=k$, we get that $\widehat M'_{k+1}(\rho)$ has at most
$k+k_2-1$ positive zeros near $\rho=0$.
Therefore, for $k_1=k+1$ and $k_2\ge k_1$ $\overline M(\rho)$ has at most $k+k_2+1$ positive zeros.
\end{proof}

\section{Proof of the Main Results}\label{sec3}
In this section, we shall present the proof for Theorems \ref{thm1} and \ref{thm2}, respectively.
To prove Theorem \ref{thm1}, we rewrite $M_m(h)$ in \eqref{c1b} as a linear combination of some Abelian integrals $I_{i,2j+1}$ in the next lemma.

    \begin{lm}\label{lm6}
    For any integers $m\ge 2$, $n\ge1$ and $r\ge0$,  the Melnikov function $M_m(h)$ in \eqref{c1b} can be simplified into the following form
    \begin{equation}\label{b31m}
    M_m(h)=\sum_{i=0}^{n+m-1}A_{i}\,I_{i,2r+1}+\sum_{i=1}^{m-1}A_{n+m-1+i}\,L_{n-1+i,r+m-i}(h),
    \end{equation}
    where the coefficients $A_{i}$, $i=0,\cdots, n+2(m-1)$ can be taken as free parameters, and
    $L_{i,j+1}(h)=(i+1)I_{i+1,2j+3}-(2j+3)I_{i+2,2j+1}$.
    \end{lm}

    \begin{proof}
    We shall prove \eqref{b31m} by induction on $m$. When $m=2$, for any $r\ge0$ by \eqref{c1b} we have
    \begin{equation}\label{c2}
    \begin{aligned}
    M_2(h)&
    =\sum_{j=r}^{r+1}\sum_{i=0}^{n} \tilde c_{i,2j+1}I_{i,2j+1}=\sum_{i=0}^{n} \tilde c_{i,2r+1}I_{i,2r+1}
    +\sum_{i=0}^{n}\tilde c_{i,2r+3}I_{i,2r+3}.
    \end{aligned}\end{equation}
    For any $0\leq i \leq n-2$, by \eqref{zb} we have
    \begin{equation}\label{c3}
    \begin{aligned}
    I_{i,2r+3}&=\frac {2(2r+3)}{2i+1}I_{i+1,2r+1}-(2r+3)\sum_{j=0}^{n-3-i}c_{i,j}I_{i+j+2,2r+1}\\
    &+\frac {(2i-1)!!(n-2)!}{(2n-3)!!i!}[(n-1)I_{n-1,2r+3}-(2r+3)I_{n,2r+1}],
    \end{aligned}\end{equation}
    where $c_{i,j}$ is given in \eqref{zb1}. Then substituting \eqref{c3} into \eqref{c2} yields
    \begin{equation}\label{c3b}
    M_2(h)=\sum_{i=0}^{n}\tilde c_{i} I_{i,2r+1}+\tilde c_{n+1} I_{n-1,2r+3}+\tilde c_{n+2} I_{n,2r+3},
    \end{equation}
    where the coefficients $\tilde c_{0},\tilde c_{1},\cdots,\tilde c_{n+2}$ are independent.
    For any $n\ge 1$, by \eqref{zb} with $i=n-1$ and $k=1$ we have
    \begin{equation}\label{c3a}
    I_{n-1,2r+3}=\frac {2(2r+3)}{2n-1}I_{n,2r+1}-\frac {2r+3}{4n^2-1}I_{n+1,2r+1}+\frac {n}{4n^2-1}L_{n,r+1}(h).
    \end{equation}
    Then using \eqref{c3a} and \eqref{za}$|_{i=n}$ to remove $I_{n-1,2r+3}$ and $I_{n,2r+3}$ in \eqref{c3b},
     we can have \eqref{b31m} for $m=2$ with
    \begin{equation}\label{c4}
    \begin{aligned}
    &A_{i}=\tilde c_{i},\quad 0\leq i \leq n-1,\\
    & A_{n}=\tilde c_{n}+\frac {2(2r+3)}{2n-1}\tilde c_{n+1},\\
    &A_{n+1}=-\frac {2r+3}{4n^2-1}\tilde c_{n+1}+\frac {2(2r+3)}{2n+1}\tilde c_{n+2},\\
    & A_{n+2}=\frac {n}{4n^2-1}\tilde c_{n+1}+\frac {1}{2n+1}\tilde c_{n+2}.
    \end{aligned}\end{equation}
    By \eqref{c4} we can get that $A_0,A_1,\cdots,A_{n+2}$ are linearly independent.

    Suppose \eqref{b31m} holds for $m=k\geq 2$. Then for any integers $n\geq 1$ and $r\geq 0$, we have
    \begin{equation}\label{c5}
    \begin{aligned}
    M_k(h)=\sum_{i=0}^{n+k-1}A_iI_{i,2r+1}+\sum_{i=1}^{k-1}A_{n+k-1+i} L_{n-1+i,r+k-i}(h),
    \end{aligned}\end{equation}
    where the coefficients $A_i$, $i=0,\cdots, n+2(k-1)$ can be taken as free parameters.
    When $m=k+1$, by \eqref{c1b} we have
    \begin{equation}\label{c6}
    M_{k+1}(h)=\sum_{i=0}^{n}\tilde c_{i,2r+1}I_{i,2r+1}+ M^{\ast}_k(h),\,\,\,\,
     M^{\ast}_k(h)=\sum_{j=r+1}^{r+k}\sum_{i=0}^n \tilde c_{i,2j+1}I_{i,2j+1}.
     \end{equation}
    Then by \eqref{c5} $M^{\ast}_k(h)$ can be rewritten as
    \begin{equation}\label{c7}
    \begin{split}
    &M^{\ast}_k(h)=\sum_{i=0}^{n+k-1}\tilde A_i I_{i,2r+3}+\tilde L_{k-1}(h),\\
    &\tilde L_{k-1}(h)=\sum_{i=1}^{k-1}\tilde A_{n+k-1+i}\, L_{n-1+i,r+1+k-i}(h).
    \end{split}
    \end{equation}
    Note that by \eqref{c3}, for any $0\leq i\leq n-2$ the integral $I_{i,2r+3}$ can be expressed
    as a linear combination of $I_{0,2r+1},I_{1,2r+1},\cdots,I_{n,2r+1}$ and $I_{n-1,2r+3}$.
    Then by \eqref{c3} and \eqref{c7}, $M_{k+1}(h)$ in \eqref{c6} can be further simplified into
    \begin{equation}\label{c8}
    \begin{aligned}
    M_{k+1}(h)=\sum_{i=0}^{n}\tilde c_{i} I_{i,2r+1}+\tilde c_{n+1} I_{n-1,2r+3} + \sum_{i=n}^{n+k-1}\tilde A_i I_{i,2r+3}+\tilde L_{k-1}(h),
    \end{aligned}\end{equation}
    where the coefficients $\tilde c_{i}$, $i=0,1,\cdots,n+1$ and $\tilde A_i$, $i=n,\cdots,n+2(k-1)$ are linearly
    independent. By \eqref{za}$|_{i=n-1}$, we have
    \begin{equation}\label{c9}
    \begin{aligned}
    I_{n-1,2r+3}=\frac {2(2r+3)}{2n-1}I_{n,2r+1}+\frac {1}{2n-1}[nI_{n,2r+3}-(2r+3)I_{n+1,2r+1}].
    \end{aligned}\end{equation}
    Substituting \eqref{c9} into \eqref{c8} yields
    \begin{equation}\label{c10}
    \begin{split}
    M_{k+1}(h)=&\sum_{i=0}^{n}B_i I_{i,2r+1}+\tilde B_{n+1} I_{n+1,2r+1}\\
    &\qquad+\tilde B_{n+2} I_{n,2r+3}+\sum_{i=n+1}^{n+k-1}\tilde A_i I_{i,2r+3}+\tilde L_{k-1}(h),
    \end{split}\end{equation}
    where
    \begin{equation*}
    \begin{aligned}
    &B_i=\tilde c_{i},\quad 0\leq i \leq n-1,\quad B_n=\tilde c_{n}+\frac {2(2r+3)}{2n-1}\tilde c_{n+1},\\
    &\tilde B_{n+1}=-\frac {2r+3}{2n-1}\tilde c_{n+1},\quad \tilde B_{n+2}=\frac {n}{2n-1}\tilde c_{n+1}+\tilde A_n,
    \end{aligned}\end{equation*}
    from which we can see that the coefficients in \eqref{c10} are also linearly independent.
    Similarly, substituting \eqref{za}$|_{i=n}$ into \eqref{c10}, we obtain
    \begin{equation}\label{c11}
    \begin{split}
    M_{k+1}(h)=&\sum_{i=0}^{n+1}B_{i+1} I_{i,2r+1}+\tilde B_{n+2} I_{n+2,2r+1}\\
    &\qquad+\tilde B_{n+3} I_{n+1,2r+3}+\sum_{i=n+2}^{n+k-1}\tilde A_i I_{i,2r+3}+\tilde L_{k-1}(h)
    \end{split}
    \end{equation}
    with independent coefficients. We can repeat this process to remove $I_{n+1,2r+3}$, $\cdots$, $I_{n+k-2,2r+3}$ from \eqref{c11} and get
    \begin{equation}\label{c12}
    \begin{aligned}
    M_{k+1}(h)=\!\!\sum_{i=0}^{n+k-1}B_i I_{i,2r+1}+\tilde B_{n+k} I_{n+k,2r+1}+\tilde B_{n+k+1} I_{n+k-1,2r+3}+\tilde L_{k-1}(h),
    \end{aligned}\end{equation}
    where the coefficients are still independent. Then using \eqref{za}$|_{i=n+k-1}$ to remove $I_{n+k-1,2r+3}$ from \eqref{c12}, we derive
    \begin{equation*}
    \begin{split}
    M_{k+1}(h)=&\sum_{i=0}^{n+k}B_i I_{i,2r+1}+\tilde L_{k-1}(h)+\tilde A_{n+2k-1}L_{n+k-1,r+1}(h)\\
    =&\sum_{i=0}^{n+k}B_i I_{i,2r+1}+\sum_{i=1}^{k}\tilde A_{n+k-1+i}\, L_{n-1+i,r+1+k-i}(h),
    \end{split}\end{equation*}
    where
    \begin{equation*}
    \begin{aligned}
    B_{n+k}=\tilde B_{n+k} +\frac {2(2r+3)}{2n+2k-1}\tilde B_{n+k+1},\qquad \tilde A_{n+2k-1}=\frac {1}{2n+2k-1}\tilde B_{n+k+1}.
    \end{aligned}\end{equation*}
    Note that the coefficients $B_{i}$, $i=0,1,\cdots,n+k$, and $\tilde A_{i}$, $i=n+k,\cdots,n+2k-1$, are independent.
    Then \eqref{b31m} is proved for $m=k+1$. The proof is completed.
    \end{proof}

Next, we present the proof of Theorem \ref{thm1} as below.

\begin{proof} [Proof of Theorem \ref{thm1}]
By Theorem \ref{thm3} we need to study the asymptotic expansion of $M_m(h)$ in \eqref{c1b} near $h=0$.
Here we only present the proof for the case of $n\ge 1$.  Using (III) of Lemma \ref{lm2}
the conclusion for $n=0$ can be similarly proved.

Let $n\ge 1$. We firstly consider the case of $m=1$. From \eqref{c1b} we have
     \begin{equation*}
     \begin{aligned}
     M_1(h)=\oint_{\Gamma_h}\sum_{i=0}^{n}\tilde c_{i,2r+1}(1-\cos(x))^iy^{2r+1}\mathrm{d}x=\sum_{i=0}^{n}\tilde c_{i,2r+1}I_{i,2r+1},
    \end{aligned}\end{equation*}
where the coefficients $\tilde c_{i,2r+1}$'s can be taken as independent parameters.
Then by (III) of Lemma \ref{lm2}, we get
\begin{equation}\label{d0}
        I_{i,2r+1}(h)=h^{i+r+1}\sum_{k=0}^{+\infty}\tilde b_k b_{i,r}^k h^k,\quad 0\le h\ll 1,
    \end{equation}
    where $\tilde b_k$ and $b_{i,r}^k$ are given in \eqref{z3b}.
Then $M_1(h)$ has the following asymptotic expansion
\begin{equation*}
M_1(h)=h^{r+1}\sum_{k=0}^{+\infty}B_kh^k,\quad 0\le h\ll 1,
\end{equation*}
where
\begin{equation*}
B_k=\tilde b_k b_{0,r}^kc_{0,2r+1} + \tilde b_{k-1} b_{1,r}^{k-1} c_{1,2r+1}+\cdots+ \tilde b_0 b_{k,r}^0 c_{k,2r+1},
\quad k=0,1,\cdots,n.
\end{equation*}
Then the Jacobian matrix
$\frac{\partial(B_0,B_1,\cdots,B_{n})}{\partial(c_{0,2r+1},
\cdots,c_{n,2r+1})}$
is a lower triangle matrix with rank $n+1$. Thus, if $B_i=0$, $i=0,\cdots,n$,
then we get $c_{i,2r+1}=0$, $i=0,\cdots,n$, which implies $M_1(h)\equiv0$.
Then by Theorem \ref{thm3} $M_1(h)$ has at most $n$ positive zeros near $h=0$,
and this upper bound can be reached.

     Next, we study the case of $m=2$. By \eqref{b31m} and (III) of Lemma \ref{lm2}, $M_2(h)$ is given by
     \begin{equation}\label{d0a}
     M_2(h)=\sum_{i=0}^{n+1}A_iI_{i,2r+1}(h)+A_{n+2}L_{n,r+1}(h)=h^{r+1}\sum_{k=0}^{+\infty}B_kh^k.
     \end{equation}
     In order to apply  Theorem \ref{thm3}, we need to get the asymptotic expansion of $L_{n,r+1}(h)$ near $h=0$. Note that by \eqref{z3b} we have $(i+1)b_{i+1,j+1}^{k}-(2j+3)b_{i+2,j}^{k}=-(k+\frac {1}{2})b_{i+1,j+1}^{k}$. Then by \eqref{z3a} for any integers $i\geq 0$ and $j\geq 0$ we obtain
     \begin{equation}\label{d1}
     \begin{aligned}
     L_{i,j+1}(h)=&\,h^{i+j+3}\sum_{k=0}^{\infty}\tilde b_{k}[(i+1)b_{i+1,j+1}^{k}-(2j+3)b_{i+2,j}^{k}]h^k\\
     =&\,-h^{i+j+3}\sum_{k=0}^{\infty}\left(k+\frac{1}{2}\right)\tilde b_{k}b_{i+1,j+1}^{k}h^k.
     \end{aligned}
 \end{equation}
     Then we have
     \begin{equation*}
     L_{n,r+1}(h)=-\frac {2^{r+2}\Gamma(n+\frac{3}{2})\Gamma(r+\frac{5}{2})}{\Gamma(n+r+4)}h^{n+r+3}+O(h^{n+r+4}).
     \end{equation*}
     By \eqref{d0} and \eqref{d1}, 
     the Jacobian matrix
     $\frac{\partial(B_0,B_1,\cdots,B_{n+2})}{\partial(A_0,A_1,,\cdots,A_{n+2})}$
is a lower triangle matrix with rank $n+3$.
     Therefore, by Theorem \ref{thm3} $M_2(h)$ has at most $n+2$ positive zeros near $h=0$, and this upper bound is sharp.

     When $m>2$, for $M_m(h)$ in \eqref{b31m} we suppose
     \begin{equation*}
     \tilde L_{m-1}(h)=\sum_{i=1}^{m-1}A_{n+m-1+i}L_{n-1+i,r+m-i}(h),
     \end{equation*}
     where the coefficients $A_{n+m},\cdots,A_{n+2m-2}$ can  be taken as free parameters. Then by \eqref{d1} $\tilde L_{m-1}(h)$ can  be expanded as
     \begin{equation*}
     \tilde L_{m-1}(h)=-h^{n+m+r+1}\sum_{k=0}^{\infty}\left(k+\frac {1}{2}\right)\tilde b_k B_k h^k,
     \end{equation*}
     where
     \begin{equation}\label{d2}
     \begin{aligned}
     B_k=\sum_{i=1}^{m-1}b_{n+i,r+m-i}^{k}A_{n+m-1+i}.
     \end{aligned}\end{equation}
     By \eqref{b31m}, \eqref{d0} and \eqref{d1}, in order to complete the proof we only need to show
     \begin{equation*}
      \mbox{rank}(D_{m-1})=m-1,
      \end{equation*}
      where
      \begin{equation*}
      D_{m-1}=\frac{\partial(B_0,B_1,\cdots,B_{m-2})}{\partial(A_{n+m},\cdots,A_{n+2m-2})}.
      \end{equation*}

      Then by \eqref{d2} we have
      \begin{equation*}
      D_{m-1}=
      \begin{pmatrix}
      b_{n+1,r+m-1}^{0}&b_{n+2,r+m-2}^{0}&\cdots &b_{n+m-1,r+1}^{0}\\
      b_{n+1,r+m-1}^{1}&b_{n+2,r+m-2}^{1}&\cdots &b_{n+m-1,r+1}^{1}\\
      \vdots &\vdots &\ddots &\vdots \\
      b_{n+1,r+m-1}^{m-2}&b_{n+2,r+m-2}^{m-2}&\cdots &b_{n+m-1,r+1}^{m-2}
      \end{pmatrix}.
      \end{equation*}
      By \eqref{z3b} we can remove common factors from each row and each column of $D_{m-1}$, and then get
      \begin{equation}\label{d3}
      \tilde D_{m-1}=
      \begin{pmatrix}
      1&1&1&\cdots &1\\
      n+\frac {3}{2}&n+\frac {5}{2}&n+\frac {7}{2}&\cdots &n+m-\frac {1}{2}\\
      \prod\limits_{k=n}^{n+1}(k\!+\!\frac {3}{2})&\prod\limits_{k=n}^{n+1}(k\!+\!\frac {5}{2})&\prod\limits_{k=n}^{n+1}(k\!+\!\frac {7}{2})&\cdots &\prod\limits_{k=n}^{n+1}(k\!+\!m\!-\!\frac {1}{2})\\
      \vdots &\vdots &\vdots &\ddots &\vdots \\
      \prod\limits_{k=n}^{n+m-3}\!\!\!(k\!+\!\frac {3}{2})&\prod\limits_{k=n}^{n+m-3}\!\!\!(k\!+\!\frac {5}{2})&\prod\limits_{k=n}^{n+m-3}\!\!\!(k\!+\!\frac {7}{2})&\cdots &\prod\limits_{k=n}^{n+m-3}\!\!(k\!+\!m\!-\!\frac {1}{2})
      \end{pmatrix}.
      \end{equation}
      Then rank$(D_{m-1})=m-1$ if and only if rank$(\tilde D_{m-1})=m-1$.

      We add the $i$th row multiplied by $-(n+i+\frac {1}{2})$ to the $(i+1)$th row in the order for $i$ from $m-2$ to 1, and then obtain
      \begin{equation*}
      \tilde D_{m-1}\longrightarrow
      \begin{pmatrix}
      1&E_{1\times (m-2)}\\
      0&D_{m-2}^{\ast}
      \end{pmatrix},
      \end{equation*}
      where $E_{1\times (m-2)}=(1,1,\cdots,1)_{1\times (m-2)}$ and
      \begin{equation*}
      D_{m-2}^{\ast}=
      \begin{pmatrix}
      1&2&\cdots &m-2\\
      n\!+\!\frac {5}{2}&2(n\!+\!\frac {7}{2})&\cdots &(m\!-\!2)(n\!+\!m\!-\!\frac {1}{2})\\
      \vdots &\vdots &\ddots &\vdots \\
      \prod\limits_{k=n}^{n+m-4}\!(k\!+\!\frac {5}{2})&2\!\!\prod\limits_{k=n}^{n+m-4}(k\!+\!\frac {7}{2})&\cdots &(m-2)\!\!\prod\limits_{k=n}^{n+m-4}(k\!+\!m\!-\!\frac {1}{2})
      \end{pmatrix}.
      \end{equation*}
By dividing the $i$th column of $D_{m-2}^{\ast}$ by $i$, $i=2,\cdots ,m-2$,
      and replacing $n$ by $n^{\ast}-1$, we get $\tilde D_{m-2}$ with $n=n^{\ast}$. Then rank$(\tilde D_{m-1})=m-1$ if and only if
      rank$(\tilde D_{m-2})=m-2$ for any $n$. It is evident that by \eqref{d3} rank$(\tilde D_{2})=2$ for any $n$.
      Therefore, rank$(\tilde D_{m-1})=m-1$ for any $m\geq 3$. The proof is completed.
    \end{proof}

Next, we shall prove Theorem \ref{thm2} by studying the asymptotic expansion of $\widetilde M(h)$ in \eqref{t6} near $h=0$.
To simplify the Melnikov function $\widetilde M(h)$ of system \eqref{b12},
we have the next lemma for the integral
\begin{equation}\label{f1}
\widetilde M_l(h)= \int_{\Gamma_h^+}\sum_{k=\tilde r}^{l+\tilde r-1} Q^e_{n,2k}(x)y^{2k}\mathrm{d}x=\sum_{k=\tilde r}^{l+\tilde r-1}\sum_{i=0}^n b_{i,2k}J_{i,2k},
\end{equation}
where $\tilde r=[\frac{s_1+1}{2}]$ and $l=[\frac{\hat s-2\tilde r+2}{2}]$. 

\begin{lm}\label{lm8}
    For any integers $n\ge1$, $\tilde r\ge1$ and $l\ge 2$,
    $\widetilde M_{l}(h)$ in \eqref{f1} can be rewritten in the following form
    \begin{equation}\label{f2}
    \begin{aligned}
    \widetilde M_{l}(h)&=\sum_{i=0}^{n+l-1} A_{i}\,J_{i,2\tilde r}+\sum_{i=1}^{l-1} A_{n+l-1+i}\, \widetilde L_{n-1+i,\tilde r+l-1-i}(h),
    \end{aligned}\end{equation}
    where the coefficients $A_{i}$, $i=0,\cdots, n+2(l-1)$ can be taken as free parameters, and
    $\widetilde L_{i,j}(h)=(i+1)J_{i+1,2j+2}-(2j+2)J_{i+2,2j}$.
    \end{lm}

    \begin{proof}
Using induction on $l$, we can prove \eqref{f2} by the same way
as in the proof of Lemma \ref{lm6}.
We shall give the proof for the case of $l=2$.
For $l>2$ the details are omitted here.

For $\widetilde M_2(h)$ in \eqref{f1} we have
    \begin{equation}\label{f3}
    \begin{aligned}
    \widetilde M_2(h)&
    &=\sum_{i=0}^{n} b_{i,2\tilde r}J_{i,2\tilde r}
    +\sum_{i=0}^{n} b_{i,2\tilde r+2}J_{i,2\tilde r+2}.
    \end{aligned}\end{equation}
    For any $0\leq i \leq n-2$, by \eqref{zg} we get that
    $J_{i,2\tilde r+2}$ can be expressed as a linear combination of $J_{i+1,2\tilde r}$,
    $J_{i+2,2\tilde r}$, $\cdots$, $J_{n,2\tilde r}$ and $J_{n-1,2\tilde r+2}$.
Then $\widetilde M_2(h)$ in \eqref{f3} can be written as
    \begin{equation*}
    \widetilde M_2(h)=\sum_{i=0}^{n}\tilde c_{i} J_{i,2\tilde r}+\tilde c_{n+1} J_{n-1,2\tilde r+2}+\tilde c_{n+2} J_{n,2\tilde r+2},
    \end{equation*}
    where the coefficients $\tilde c_{0},\tilde c_{1},\cdots,\tilde c_{n+2}$ can be taken as independent parameters. Using \eqref{zf} with $i=n-1$ and $i=n$ in order,
     we can have \eqref{f2} for $l=2$ with
    \begin{equation}\label{f5}
    \begin{aligned}
    &A_{i}=\tilde c_{i},\quad 0\leq i \leq n-1,\\
    &A_{n}=\tilde c_{n}+\frac {2(2\tilde r+2)}{2n-1}\tilde c_{n+1},\\
    &A_{n+1}=-\frac {2\tilde r+2}{4n^2-1}\tilde c_{n+1}+\frac {2(2\tilde r+2)}{2n+1}\tilde c_{n+2},\\
    &A_{n+2}=\frac {n}{4n^2-1}\tilde c_{n+1}+\frac {1}{2n+1}\tilde c_{n+2}.
    \end{aligned}\end{equation}
    By \eqref{f5} we can get that the coefficients $A_{i},0\leq i \leq n+2$ are linearly independent.
    \end{proof}

Next, we present the proof of Theorem \ref{thm2} as below.

\begin{proof}[Proof of Theorem \ref{thm2}]
We shall prove the conclusion for $n\ge1$ and $\hat s>s_1$ by using Theorem \ref{thm4}.
The other cases can be similarly proved.

By \eqref{t6}, $\widetilde M(h)$  can be written as
\begin{equation}\label{f6}
\widetilde M(h)= \sum_{s=s_1}^{\hat s}\sum_{i=0}^n b_{i,s}J_{i,s}(h)
=\widehat M_m(h)+\widetilde M_l(h),
\end{equation}
where
\begin{equation}\label{f7}
\widehat M_m(h)= \frac{1}{2} \sum_{k=r}^{m+r-1}\sum_{i=0}^n b_{i,2k+1} I_{i,2i+1}(h),\quad
\widetilde M_l(h)=\sum_{k=\tilde r}^{l+\tilde r-1}\sum_{i=0}^n b_{i,2k} J_{i,2k}(h),
\end{equation}
and $r=[\frac{s_1}{2}]$, $m=[\frac{\hat s-2r+1}{2}]$, $\tilde r=[\frac{s_1+1}{2}]$
and $l=[\frac{\hat s-2\tilde r+2}{2}]$.
By \eqref{z3a} and \eqref{zh1}, $\widehat M_m(h)$ and $\widetilde M_l(h)$ can be expanded as
\begin{equation}\label{f8}
\widehat M_m(h)=h^{r+1}\sum_{k=0}^{+\infty}B_kh^k,\quad
\widetilde M_l(h)=h^{\tilde r+\frac{1}{2}}\sum_{k=0}^{+\infty}\widetilde B_k h^k.
\end{equation}
Let  $\boldsymbol{\delta}_s=(b_{0,s},b_{1,s},\cdots,b_{n,s})$, $s_1\le s \le \hat s$.
Note that by \eqref{f7} and \eqref{f8} the coefficients $B_k$'s and $\widetilde B_k$'s
in \eqref{f8} linearly depend on parameter vectors $\boldsymbol{\delta}_{2k+1}$'s
and $\boldsymbol{\delta}_{2k}$'s, respectively.
From the proof of Theorem \ref{thm1} we get that $\widehat M_m(h)\equiv0$ when $B_k=0$,
$0\le k\le n+2m-2$, and
\begin{equation*}
\mbox{rank}\left(\frac{\partial(B_0,B_1,\cdots,B_{n+2m-2})}
{\partial(\boldsymbol{\delta}_{2r+1}, \boldsymbol{\delta}_{2r+3},\cdots,
\boldsymbol{\delta}_{2r+2m-1})}\right)=n+2m-1.
\end{equation*}
Then by \eqref{f6}-\eqref{f8} and Theorem \ref{thm4} we only need to prove that for any integer $l\ge1$
\begin{equation}\label{f9}
\begin{split}
&\mbox{rank}\left(\frac{\partial(\widetilde B_0,\widetilde B_1,\cdots,\widetilde B_{n+2l-2})}
{\partial(\boldsymbol{\delta}_{2\tilde r}, \boldsymbol{\delta}_{2\tilde r+2},\cdots,
\boldsymbol{\delta}_{2\tilde r+2l-2})}\right)=n+2l-1\\
\mbox{and}\,\,& \widetilde M_l(h)\equiv0 \,\,\mbox{if}\,\, \widetilde B_k=0,\,\,
0\le k\le n+2l-2.\\
\end{split}
\end{equation}

     For the case of $l=1$, from \eqref{f7} we have
     \begin{equation}\label{f10}
     \begin{aligned}
     \widetilde M_1(h)=\int_{\Gamma_h^+}\sum_{i=0}^{n}b_{i,2\tilde r}(1-\cos(x))^iy^{2\tilde r}\mathrm{d}x=\sum_{i=0}^{n}b_{i,2\tilde r}J_{i,2\tilde r}.
    \end{aligned}\end{equation}
By (III) of Lemma \ref{lm4}, we get
\begin{equation}\label{f11}
        J_{i,2\tilde r}(h)=h^{i+\tilde r+\frac {1}{2}}\sum_{k=0}^{+\infty}\tilde b_k c_{i,\tilde r}^k h^k,\quad 0\le h\ll 1,
    \end{equation}
    where $\tilde b_k$ and $c_{i,r}^k$ are given in \eqref{z3b} and \eqref{zh2}.
Then by \eqref{f8}$|_{l=1}$, \eqref{f10} and \eqref{f11} we obtain
\begin{equation*}
B_i=\tilde b_k c_{0,\tilde r}^kb_{0,2\tilde r} + \tilde b_{k-1} b_{1,\tilde r}^{k-1} b_{1,2\tilde r}+\cdots+ \tilde b_0 c_{k,\tilde r}^0 b_{k,2\tilde r},
\quad i=0,1,\cdots,n,
\end{equation*}
which implies that the Jacobian matrix
$\frac{\partial(\widetilde{B}_0,\widetilde{B}_1,\cdots,\widetilde{B}_{n})}
{\partial(b_{0,2\tilde r},
\cdots,b_{n,2\tilde r})}$
is a lower triangle matrix with rank $n+1$,
and $\widetilde B_k=0$,
$0\le k\le n$ if and only if $b_{i,2\tilde r}=0$, $0\le i\le n$.
Then \eqref{f9} holds for $l=1$.

     Next, we study the case of $l=2$. By \eqref{f2}, $\widetilde  M_2(h)$ is given by
     \begin{equation}\label{g0}
     \widetilde M_2(h)=\sum_{i=0}^{n+1}A_iJ_{i,2\tilde r}(h)+A_{n+2}\widetilde L_{n,\tilde r+1}(h).
     \end{equation}
     We need to get the series expansion of $\widetilde L_{n,\tilde r+1}(h)$ near $h=0$. Note that by \eqref{zh2} we have $(i+1)c_{i+1,j+1}^{k}-(2j+2)c_{i+2,j}^{k}=-(k+\frac {1}{2})c_{i+1,j+1}^{k}$. Then by \eqref{zh1} for any integers $i\geq 0$ and $j\geq 1$ we obtain
     \begin{equation}\label{g1}
     \begin{aligned}
     \widetilde L_{i,j}(h)&=h^{i+j+\frac {5}{2}}\sum_{k=0}^{\infty}\tilde b_{k}[(i+1)c_{i+1,j+1}^{k}-(2j+2)c_{i+2,j}^{k}]h^k\\
     &=-h^{i+j+\frac {5}{2}}\sum_{k=0}^{\infty}(k+\frac {1}{2})\tilde b_{k}c_{i+1,j+1}^{k}h^k,
     \end{aligned}\end{equation}
     which yields
     \begin{equation}\label{g1a}
     \widetilde L_{n,\tilde r}(h)=-\frac {2^{\tilde r+\frac{1}{2}}\Gamma(n+\frac{3}{2})\Gamma(\tilde r+2)}{\Gamma(n+\tilde r+\frac{7}{2})}h^{n+\tilde r+\frac{5}{2}}+O(h^{n+\tilde r+\frac{7}{2}}).
     \end{equation}
     Then by \eqref{f8}$|_{l=2}$, \eqref{f11}, \eqref{g0} and \eqref{g1a} we get
 that
$\frac{\partial(\widetilde{B}_0,\widetilde{B}_1,\cdots,\widetilde{B}_{n+2})}
{\partial(A_0,A_1,
\cdots,A_{n+2})}$
is a lower triangle matrix with rank $n+3$,
and $\widetilde B_k=0$,
$0\le k\le n+2$ if and only if $A_{i}=0$, $0\le i\le n+2$.
Because the coefficients $A_i$'s, $0\le i\le n+2$ in \eqref{g0} are independent linear combinations of $\boldsymbol{\delta}_{2\tilde r}$ and $\boldsymbol{\delta}_{2\tilde r+2}$.
Then \eqref{f9} holds for $l=2$.

     For $l\ge 3$, $\widetilde M_{l}(h)$ in \eqref{f8} can be still written
     in the form of \eqref{f2}. For \eqref{f2} we suppose
     \begin{equation*}
     \widehat L_{l-1}(h)=\sum_{i=1}^{l-1}A_{n+l-1+i}\widetilde L_{n-1+i,\tilde r+l-1-i}(h),
     \end{equation*}
     where the coefficients $A_{n+l},\cdots,A_{n+2l-2}$ can  be taken as free parameters. Then by \eqref{g1} $\widehat L_{l-1}(h)$ can  be expanded as
     \begin{equation*}
     \widehat L_{l-1}(h)=-h^{n+l+\tilde r+\frac {1}{2}}\sum_{k=0}^{\infty}(k+\frac {1}{2})\tilde b_k \widehat{B}_k h^k,
     \end{equation*}
     where
     \begin{equation}\label{g2}
     \begin{aligned}
     \widehat{B}_k=\sum_{i=1}^{l-1}c_{n+i,\tilde r+l-i}^{k}A_{n+l-1+i}.
     \end{aligned}\end{equation}
     From the above proof we can see that we only need to show
     $\mbox{rank}(D_{l-1})=l-1$, where
      \begin{equation*}
      D_{l-1}=\frac{\partial(\widehat{B}_0,\widehat{B}_1,\cdots,\widehat{B}_{l-2})}{\partial(A_{n+l},\cdots,A_{n+2l-2})}.
      \end{equation*}

      By \eqref{g2} we have
      \begin{equation*}
      D_{l-1}=
      \begin{pmatrix}
      c_{n+1,\tilde r+l-1}^{0}&c_{n+2,\tilde r+l-2}^{0}&\cdots &c_{n+l-1,\tilde r+1}^{0}\\
      c_{n+1,\tilde r+l-1}^{1}&c_{n+2,\tilde r+l-2}^{1}&\cdots &c_{n+l-1,\tilde r+1}^{1}\\
      \vdots &\vdots &\ddots &\vdots \\
      c_{n+1,\tilde r+l-1}^{l-2}&c_{n+2,\tilde r+l-2}^{l-2}&\cdots &c_{n+l-1,\tilde r+1}^{l-2}
      \end{pmatrix}.
      \end{equation*}
      Using \eqref{zh2} we can remove common factors from each row and column of $D_{l-1}$,
      and then get
      \begin{equation}\label{g3}
      \widetilde D_{l-1}=
      \begin{pmatrix}
      1&1&1&\cdots &1\\
      n+\frac {3}{2}&n+\frac {5}{2}&n+\frac {7}{2}&\cdots &n+l-\frac {1}{2}\\
      \prod\limits_{k=n}^{n+1}(k\!+\!\frac {3}{2})&\prod\limits_{k=n}^{n+1}(k\!+\!\frac {5}{2})&\prod\limits_{k=n}^{n+1}(k\!+\!\frac {7}{2})&\cdots &
      \prod\limits_{k=n}^{n+1}(k\!+\!l\!-\!\frac {1}{2})\\
      \vdots &\vdots &\vdots &\ddots &\vdots \\
      \prod\limits_{k=n}^{n+l-3}(k\!+\!\frac {3}{2})&\prod\limits_{k=n}^{n+l-3}(k\!+\!\frac {5}{2})&\prod\limits_{k=n}^{n+l-3}(k\!+\!\frac {7}{2})&\cdots &\prod\limits_{k=n}^{n+l-3}(k\!+\!l\!-\!\frac {1}{2})
      \end{pmatrix}.
      \end{equation}
      Then rank$(D_{l-1})=l-1$ if and only if rank$(\widetilde D_{l-1})=l-1$.

      We add the $i$th row multiplied by $-(n+i+\frac {1}{2})$ to the $(i+1)$th row in the order from $l-2$ to 1, and then obtain
      \begin{equation*}
      \widetilde D_{l-1}\longrightarrow
      \begin{pmatrix}
      1&E_{1\times (l-2)}\\
      0&D_{l-2}^{\ast}
      \end{pmatrix},
      \end{equation*}
      where $E_{1\times (l-2)}=(1,1,\cdots,1)_{1\times (l-2)}$ and
      \begin{equation*}
      D_{l-2}^{\ast}=
      \begin{pmatrix}
      1&2&\cdots &l-2\\
      n\!+\!\frac {5}{2}&2(n\!+\!\frac {7}{2})&\cdots &(l\!-\!2)(n\!+\!l\!-\!\frac {1}{2})\\
      \vdots &\vdots &\ddots &\vdots \\
      \prod\limits_{k=n}^{n+l-4}(k\!+\!\frac {5}{2})&2\prod\limits_{k=n}^{n+l-4}(k\!+\!\frac {7}{2})&\cdots &(l\!-\!2)\prod\limits_{k=n}^{n+l-4}(k\!+\!l\!-\!\frac {1}{2})
      \end{pmatrix}.
      \end{equation*}
      Dividing the $i$th column of $D_{l-2}^{\ast}$ by $i$, $i=2,3,\cdots ,l-2$,
       from $D_{l-2}^{\ast}$ we get $\widetilde D_{l-2}$ given by \eqref{g3}$|_{n=n+1}$.
      Then rank$(\widetilde D_{l-1})=l-1$ if and only if rank$(\widetilde D_{l-2})=l-2$ for any $n$.
      It is evident that by \eqref{g3} rank$(\widetilde D_{2})=2$ for any $n$. Therefore, rank $(\widetilde D_{l-1})=l-1$ for any $l\geq 3$. The proof is completed.
    \end{proof}

\section{Acknowledgments}
This work was supported by the National Natural Science Foundation of China (NSFC No. 12371175, 11871042).


\end{document}